\definecolor{verylight}{gray}{0.97}
\definecolor{light}{gray}{0.9}
\definecolor{medium}{gray}{0.85}
\definecolor{dark}{gray}{0.6}
\def\NZQ{\mathbb}               
\def\ZZ{{\NZQ Z}}
\def\FF{{\NZQ F}}
\def\G{{\mathcal G}}
\def\HS{\textup{HS}}
\def\pd{\textup{proj}\phantom{.}\!\textup{dim}}
\def\opn#1#2{\def#1{\operatorname{#2}}} 
\opn\chara{char} \opn\length{\ell} \opn\pd{pd} \opn\rk{rk}
\opn\projdim{proj\,dim} \opn\injdim{inj\,dim} \opn\rank{rank}
\opn\depth{depth} \opn\grade{grade} \opn\height{height}
\opn\embdim{emb\,dim} \opn\codim{codim}
\opn\Tr{Tr} \opn\bigrank{big\,rank}
\opn\superheight{superheight}\opn\lcm{lcm}
\opn\trdeg{tr\,deg}
\opn\reg{reg} \opn\lreg{lreg} \opn\ini{in} \opn\lpd{lpd}
\opn\size{size} \opn\sdepth{sdepth}
\opn\link{link}\opn\fdepth{fdepth}\opn\lex{lex}
\opn\tr{tr}
\opn\type{type}
\opn\gap{gap}
\opn\diam{diam}
\opn\Mod{Mod}
\opn\div{div} \opn\Div{Div} \opn\cl{cl} \opn\Cl{Cl}
\opn\Spec{Spec} \opn\Supp{Supp} \opn\supp{supp} \opn\Sing{Sing}
\opn\Ass{Ass} \opn\Min{Min}\opn\Mon{Mon}
\opn\Ann{Ann} \opn\Rad{Rad} \opn\Soc{Soc}
\opn\Im{Im} \opn\Ker{Ker} \opn\Coker{Coker} \opn\Am{Am}
\opn\Hom{Hom} \opn\Tor{Tor} \opn\Ext{Ext} \opn\End{End}
\opn\Aut{Aut} \opn\id{id}
\opn\nat{nat}
\opn\pff{pf}
\opn\Pf{Pf} \opn\GL{GL} \opn\SL{SL} \opn\mod{mod} \opn\ord{ord}
\opn\Gin{Gin} \opn\Hilb{Hilb}\opn\sort{sort}
\opn\PF{PF}\opn\Ap{Ap}
\opn\dist{dist}
\opn\aff{aff}
\opn\relint{relint} \opn\st{st}
\opn\lk{lk} \opn\cn{cn} \opn\core{core} \opn\vol{vol}  \opn\inp{inp} \opn\nilpot{nilpot}
\opn\link{link} \opn\star{star}\opn\lex{lex}\opn\set{set}
\opn\width{wd}
\opn\Fr{F}
\opn\QF{QF}
\opn\G{G}
\opn\type{type}\opn\res{res}
\opn\conv{conv}
\opn\sr{sr}
\opn\gr{gr}
\def\pot#1#2{#1[\kern-0.28ex[#2]\kern-0.28ex]}
\opn\dirlim{\underrightarrow{\lim}}
\opn\inivlim{\underleftarrow{\lim}}
\def\Implies{\ifmmode\Longrightarrow \else
	\unskip${}\Longrightarrow{}$\ignorespaces\fi}
\def\implies{\ifmmode\Rightarrow \else
	\unskip${}\Rightarrow{}$\ignorespaces\fi}
\def\iff{\ifmmode\Longleftrightarrow \else
	\unskip${}\Longleftrightarrow{}$\ignorespaces\fi}
\newtheorem{Theorem}{Theorem}[section]
\newtheorem{Lemma}[Theorem]{Lemma}
\newtheorem{Corollary}[Theorem]{Corollary}
\newtheorem{Proposition}[Theorem]{Proposition}
\newtheorem{Remark}[Theorem]{Remark}
\newtheorem{Example}[Theorem]{Example}
\newtheorem{Examples}[Theorem]{Examples}
\let\epsilon\varepsilon
\let\kappa=\varkappa
\def\qed{\ifhmode\textqed\fi
	\ifmmode\ifinner\hfill\quad\qedsymbol\else\dispqed\fi\fi}
\def\textqed{\unskip\nobreak\penalty50
	\hskip2em\hbox{}\nobreak\hfill\qedsymbol
	\parfillskip=0pt \finalhyphendemerits=0}
\def\dispqed{\rlap{\qquad\qedsymbol}}
\opn\dis{dis}
\def\pnt{{\raise0.5mm\hbox{\large\bf.}}}
\opn\Lex{Lex}
\begin{document}

	\title{Dirac's Theorem and Multigraded syzygies}
	\author{Antonino Ficarra, J\"urgen Herzog}
	
	\address{Antonino Ficarra, Department of mathematics and computer sciences, physics and earth sciences, University of Messina, Viale Ferdinando Stagno d'Alcontres 31, 98166 Messina, Italy}
	\email{antficarra@unime.it}
	
	\address{J\"urgen Herzog, Fakult\"at f\"ur Mathematik, Universit\"at Duisburg-Essen, 45117 Essen, Germany}
	\email{juergen.herzog@uni-essen.de}
	
	\thanks{.
	}
	
	\subjclass[2020]{Primary 13F20; Secondary 13H10}
	
	\keywords{monomial ideal, multigraded shifts, homological shift ideals, edge ideals, polymatroidal ideals}
	
	\maketitle
	
	\begin{abstract}
		Let $G$ be a simple finite graph. A famous theorem of Dirac says that $G$ is chordal if and only if $G$ admits a perfect elimination order. It is known by Fr\"oberg that the edge ideal $I(G)$ of $G$ has a linear resolution if and only if the complementary graph $G^c$ of $G$ is chordal. In this article, we discuss some algebraic consequences of Dirac's theorem in the theory of homological shift ideals of edge ideals. Recall that if $I$ is a monomial ideal, $\HS_k(I)$ is the monomial ideal generated by the $k$th multigraded shifts of $I$. We prove that $\HS_1(I)$ has linear quotients, for any monomial ideal $I$ with linear quotients generated in a single degree. For and edge ideal $I(G)$ with linear quotients, it is not true that $\HS_k(I(G))$ has linear quotients for all $k\ge0$. On the other hand, if $G^c$ is a proper interval graph or a forest, we prove that this is the case. Finally, we discuss a conjecture of Bandari, Bayati and Herzog that predicts that if $I$ is polymatroidal, $\HS_k(I)$ is polymatroidal too, for all $k\ge0$. We are able to prove that this conjecture holds for all polymatroidal ideals generated in degree two. 
	\end{abstract}
	
	\section*{Introduction}
	
	Let $S = K[x_1, \ldots, x_n]$ be the standard graded polynomial ring with coefficients in a field $K$ and $G$ be a simple graph on the vertex set $V(G)=\{1,\dots,n\}$ and with edge set $E(G)$. The \textit{edge ideal} of $G$ is the ideal $I(G)$ in $S$ generated by the monomials $x_ix_j$, such that $\{i,j\}\in E(G)$. The classification of all Cohen--Macaulay edge ideals and the classification of all edge ideals with linear resolution are fundamental problems. While the first problem is widely open and considered to be intractable in general, for the second problem we have a complete answer. The \textit{complementary graph} $G^c$ of $G$ is the graph with vertex set $V(G^c)=V(G)$ and where $\{i,j\}$ is an edge of $G^c$ if and only if $\{i,j\}\notin E(G)$. Ralph Fr\"oberg in \cite{Froberg88} proved that $I(G)$ has a linear resolution if and only $G^c$ is \textit{chordal}, that is, it has no induced cycles of length bigger than three. In turn, the classical and fundamental Dirac's theorem on chordal graphs says that a graph $G$ is chordal if and only if $G$ admits a \textit{perfect elimination order} \cite{Dirac61}.
	
	Recently, a new research trend in the theory of monomial ideals has been initiated by the second author, Moradi, Rahimbeigi and Zhu in \cite{HMRZ021a}, see, also, \cite{Bay019,BJT019,CF,F2,F2Pack,HMRZ021b}. For ${\bf a}=(a_1,\dots,a_n)\in\ZZ_{\ge0}^n$, we denote $x_1^{a_1}\cdots x_n^{a_n}$ by ${\bf x^a}$. Let $I\subset S$ be a monomial ideal and let $\FF$ be its minimal multigraded free $S$-resolution. Then, the $k$th free $S$-module in $\FF$ is $F_k=\bigoplus_{j=1}^{\beta_k(I)}S(-{\bf a}_{kj})$, where ${\bf a}_{kj}\in\ZZ_{\ge0}^n$ are the $k$th \textit{multigraded shifts} of $I$. The \emph{$k$th homological shift ideal} of $I$ is the monomial ideal generated by the monomials ${\bf x}^{{\bf a}_{kj}}$ for $j=1,\ldots,\beta_k(I)$. Note that $\HS_0(I)=I$. It is natural to ask what combinatorial and homological properties are satisfied by all $\HS_k(I)$, $k=0, \ldots, \pd(I)$. Any such property is called an \textit{homological shift property} of $I$. If all $\HS_k(I)$ have linear quotients, or linear resolution, we say that $I$ has \textit{homological linear quotients} or \textit{homological linear resolution}, respectively.
	
	In this article we discuss the algebraic consequences of Dirac's theorem on chordal graphs related to the theory of homological shift ideals of edge ideals.
	
	The article is structured as follows. In Section 1, we investigate arbitrary monomial ideals with linear quotients generated in one degree. Our main theorem states that for such an ideal $I$, $\HS_1(I)$ always has linear quotients. The proof relies upon the fact that certain colon ideals are generated by linear forms (Lemma \ref{Lem:equilinquot1}). In particular, $\HS_1(I)$ has a linear resolution. At present we are not able to generalize this result for all monomial ideals with linear resolution. In this case, one could expect even that $\HS_1(I)$ also has linear quotients, if $I$ has a linear resolution. On the other hand, if $I$ is generated in more than one degree, in Example \ref{Ex:HS1notLinQuot} we show that Theorem \ref{Thm:IHS1(I)linQuot} is no longer valid.
	
	Sections 2 and 3 are devoted to homological shifts of edge ideals with linear resolution. Let $G$ be a graph and $I(G)$ be its edge ideal. For unexplained terminology look at Section 2. Unfortunately, even if $I(G)$ has linear resolution, it may not have homological linear resolution in general, (Example \ref{Ex:I(G)NotHLQ}). At present we do not have a complete classification of all edge ideals with homological linear quotients or homological linear resolution. Thus, we determine many classes of cochordal graphs whose edge ideals have homological linear resolution. In particular, for \textit{proper interval graphs} and \textit{forests}, we prove that the edge ideals of their complementary graphs have homological linear quotients, (Theorems \ref {Thm:PropIntGr} and \ref{Thm:HSForests}). For the proof of the first result we introduce the class of \textit{reversible} chordal graphs, and show that any proper interval graph is a reversible graph, (Lemma \ref{Lem:PI=>Rev}). For the second result, we consider two operations on chordal graphs that preserve the homological linear quotients property. Namely, adding whiskers to a chordal graph and taking unions of disjoint chordal graphs, (Propositions \ref{Prop:G+Whisker} and \ref{Prop:HSDisjoint}). Using these results, it is easy to see that $I(G)$ has homological linear quotients, if $G$ is a forest. Indeed, any forest is the union of pairwise disjoint trees, and any tree can be constructed by iteratively adding whiskers to a previously constructed tree on a smaller vertex set.
	
	In the last section, we consider polymatroidal ideals. An equigenerated monomial ideal $I$ is called \textit{polymatroidal} if its minimal set of monomial generators $G(I)$ corresponds to the set of bases of a \textit{discrete polymatroid}, see \cite[Chapter 12]{JT}. Polymatroidal ideals are characterized by the fact that they have linear quotients with respect to the lexicographic order induced by any ordering of the variables. Such characterization is due to Bandari and Rahmati-Asghar \cite{BJT019}. It was conjectured by Bandari, Bayati and Herzog that all homological shift ideals of a polymatroidal ideal are polymatroidal. At present this conjecture is widely open. On the other hand, Bayati proved that the conjecture holds for any squarefree polymatroidal ideal \cite{Bay019}. The second author of this paper, Moradi, Rahimbeigi and Zhu proved that it holds for polymatroidal ideals that satisfy the strong exchange property \cite[Corollary 3.6]{HMRZ021a}; whereas the first author of this paper proved that $\HS_1(I)$ is again polymatroidal if $I$ is such \cite{F2}, pointing towards the validity of the conjecture in general. 
	
	We prove in Theorem \ref{Thm:HSPolyDegree2} that for any polymatroidal ideal $I$ generated in degree two, all homological shift ideals are polymatroidal. In the squarefree case, $I$ may be seen as the edge ideal of a cochordal graph and we apply our criterion on reversibility of perfect elimination orders. Unfortunately our methods are very special and they can not be applied to prove that homological shifts of polymatroidal ideals, generated in higher degree than two, are polymatroidal.
	\section{The first Homological shift of ideals with Linear quotients}
	
	Let $S=K[x_1,\dots,x_n]$ be the standard graded polynomial ring, with $K$ a field. A monomial ideal $I\subset S$ has \textit{linear quotients} if for some ordering $u_1,\dots,u_m$ of its minimal set of monomial generators $G(I)$, all colon ideals $(u_1,\dots,u_{i-1}):u_i$, $i=1,\dots,m$, are generated by variables. We call $u_1,\dots,u_m$ an \textit{admissible order} of $I$. Such order is called \textit{non-increasing} if $\deg(u_1)\le\deg(u_2)\le\dots\le\deg(u_m)$. By \cite[Lemma 2.1]{JahanZheng2010}, an ideal with linear quotients always has a non-increasing admissible order. So, from now, we consider only non-increasing admissible orders.\smallskip
	
	Let $u_1,\dots,u_m$ be an admissible order of an ideal $I\subset S$ having linear quotients. For $i\in\{1,\dots,m\}$, we let
	$$
	\textup{set}(u_i)=\{j:x_j\in(u_1,\dots,u_{i-1}):u_i\}.
	$$
	
	Given a non-empty subset $A$ of $\{1,\dots,n\}$, we set ${\bf x}_A=\prod_{i\in A}x_i$ and ${\bf x}_\emptyset=1$. The multigraded version of \cite[Lemma 1.5]{ET} implies that
	\begin{equation}\label{eq:HSILinQuot}
	\HS_k(I)\ =\ (u_i{\bf x}_A\ :\ i=1,\dots,m,\ A\subseteq\textup{set}(u_i),\ |A|=k).
	\end{equation}
	
	The ideal $(u_1,\dots,u_{i-1}):u_i$ is generated by the monomials $u_j:u_i=\textup{lcm}(u_j,u_i)/u_i$. Hence, $I$ has linear quotients if and only if for all $i=1,\dots,m$ and all $j<i$ there exists $\ell<i$ such that $u_\ell:u_i=x_p$ for some $p$, and $x_p$ divides $u_j:u_i$.
	
	Hereafter, we denote the set $\{1,\dots,n\}$ by $[n]$. For a monomial $u\in S$ and $i\in[n]$, the \textit{$x_i$-degree} of $u$ is the integer $\deg_{x_i}(u)=\max\{j\ge0:x_i^j\ \textup{divides}\ u\}$.
	
	For the proof of our main result we need Corollary \ref{Cor:Lasagna} of the following lemma.
	\begin{Lemma}\label{Lem:equilinquot1}
		Let $I$ be an equigenerated graded ideal with linear relations. Let $f_1,\dots,f_m$ be a minimal set of generators of $I$. Then, for any $1\le i\le m$,
		$$
		(f_1,\dots,f_{i-1},f_{i+1},\dots,f_m):f_i
		$$
		is generated by linear forms.
	\end{Lemma}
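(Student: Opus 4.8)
The plan is to show that
$$
J \ :=\ (f_1,\dots,f_{i-1},f_{i+1},\dots,f_m):f_i
$$
has no minimal generator of degree $\ge 2$. Write $I'=(f_1,\dots,f_{i-1},f_{i+1},\dots,f_m)$ and let $d$ be the common degree of the $f_j$. Since $f_1,\dots,f_m$ minimally generate $I$ we have $f_i\notin I'$, hence $1\notin J$, i.e.\ $J\ne S$; thus once we know $J$ is generated in degree $\le 1$ it is generated by linear forms. So it suffices to prove: every homogeneous $g\in J$ with $\deg g\ge 2$ lies in $\mm\cdot J$, in fact in $\mm\cdot J_1$, where $J_1$ denotes the $K$-subspace of linear forms in $J$.

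The key step is to read such a $g$ off a syzygy of $(f_1,\dots,f_m)$. Fix homogeneous $g\in J$ of degree $e\ge 2$. From $gf_i\in I'$, passing to the homogeneous component of degree $e+d$, we get an identity $gf_i=\sum_{j\ne i}h_jf_j$ with each $h_j$ homogeneous of degree $e$; equivalently $\sigma=(h_1,\dots,h_{i-1},-g,h_{i+1},\dots,h_m)$ is a homogeneous element of $\operatorname{Syz}(f_1,\dots,f_m)$, of internal degree $e+d$ inside the free module $\bigoplus_{j=1}^m Se_j$ presenting $I$ (with $\deg e_j=d$). Because $I$ has linear relations, this syzygy module is minimally generated by syzygies $\tau_k=(c_{k1},\dots,c_{km})$ of degree $d+1$, so every $c_{kj}$ is a linear form or $0$. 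Writing $\sigma=\sum_k p_k\tau_k$ and comparing degrees, each $p_k$ may be taken homogeneous of degree $(e+d)-(d+1)=e-1\ge 1$, hence $p_k\in\mm$. Finally, each $\tau_k$ being a syzygy gives $c_{ki}f_i=-\sum_{j\ne i}c_{kj}f_j\in I'$, so $c_{ki}\in I':f_i=J$, and $c_{ki}$ is linear, i.e.\ $c_{ki}\in J_1$; reading off the $i$-th coordinate of $\sigma=\sum_k p_k\tau_k$ gives $-g=\sum_k p_kc_{ki}$, so $g\in\mm\cdot J_1$, as required.

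This shows $J$ is generated in degree $\le 1$, hence by linear forms. The argument is short; the only thing needing care is the bookkeeping with internal degrees, which is precisely what forces the coefficients $p_k$ into $\mm$ and so prevents $g$ from being a new minimal generator. The genuine input is the translation ``an element of $I':f_i$ is, up to sign, the $i$-th coordinate of some syzygy of $(f_1,\dots,f_m)$'' combined with the hypothesis that these syzygies are linear. As an alternative, one could instead apply the long exact $\Tor(-,K)$ sequence to $0\to (S/J)(-d)\xrightarrow{\,\cdot f_i\,}S/I'\to S/I\to 0$: here $\Tor_2(S/I,K)_\ell=0$ for $\ell\ne d+1$ by the linear relations hypothesis, while $\Tor_1(S/I',K)_\ell=0$ for $\ell\ne d$ since $I'$ is again equigenerated in degree $d$; this forces $\Tor_1(S/J,K)_{\ell-d}=0$ for all $\ell\ge d+2$, i.e.\ $J$ has no minimal generator of degree $\ge 2$.
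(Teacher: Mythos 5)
Your proposal is correct and is essentially the paper's own argument: lift a homogeneous element of the colon ideal to a homogeneous syzygy of $(f_1,\dots,f_m)$, expand it in the linear generating syzygies, and read off the $i$-th coordinate, noting that each such coordinate is a linear form lying in the colon ideal. The degree bookkeeping forcing the coefficients into $\mm$, and the $\Tor$ reformulation at the end, are fine but add nothing beyond the paper's proof.
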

	\begin{proof}
		To simplify the notation, we may assume that $i=m$, and we set $J=(f_1,\dots,f_{m-1}):f_m$. Since the $f_i$ are homogeneous elements, $J$ is a graded ideal. Let $r_m\in J$ be an homogeneous element. Then, there exist $r_1,\ldots,r_{m-1}$ such that $r_mf_m=-\sum_{i=1}^{m-1}r_if_i$ with $\deg(r_i)=\deg(r_m)$ for $i=1,\ldots,m-1$. Therefore, $r=(r_1,\ldots,r_m)$ is a homogeneous relation of $I$. By assumption, the relation module of $I$ is generated by linear relations, say $\ell_i=(\ell_{i1},\ldots,\ell_{im})$ for $i=1,\ldots,t$. Therefore, there exist homogeneous elements $s_i\in S$ such that $r=\sum_{i=1}^{t}s_i\ell_i$. This implies that $r_m=\sum_{i=1}^{t}s_i\ell_{i,m}$. Since $\ell_{i,m}\in J$, the desired conclusion follows.
	\end{proof}
\begin{Corollary}\label{Cor:Lasagna}
	Let $I$ be an equigenerated monomial ideal with linear quotients and let $u_1,\dots,u_m$ be its minimal monomial generators. Then, for any $1\le i\le m$,
	$$
	(u_1,\dots,u_{i-1},u_{i+1},\dots,u_m):u_i
	$$
	is generated by variables.
\end{Corollary}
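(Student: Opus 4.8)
The plan is to reduce the statement to Lemma \ref{Lem:equilinquot1}. Indeed, that lemma already produces, for an equigenerated ideal with linear relations, a set of \emph{linear form} generators for the colon ideal $(f_1,\dots,f_{i-1},f_{i+1},\dots,f_m):f_i$; the only thing left to do is to promote "generated by linear forms" to "generated by variables", and this is exactly where the monomiality hypothesis of the Corollary enters. So the proof splits into two parts: checking that Lemma \ref{Lem:equilinquot1} applies, and then a short argument about monomial ideals generated in degree one.

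First I would verify the hypothesis of Lemma \ref{Lem:equilinquot1}. Since $I$ is equigenerated and has linear quotients, it is well known that $I$ has a linear resolution; in particular its first syzygy module is generated by linear relations, i.e.\ $I$ has linear relations. Taking $f_j = u_j$ for $j=1,\dots,m$, Lemma \ref{Lem:equilinquot1} then yields, for every $1 \le i \le m$, that
$$
J \ :=\ (u_1,\dots,u_{i-1},u_{i+1},\dots,u_m):u_i
$$
is generated by linear forms.

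It remains to observe that a monomial ideal generated by linear forms is in fact generated by variables, and to apply this to $J$. Note first that $J$ is a monomial ideal, being the colon of the monomial ideal $(u_1,\dots,u_{i-1},u_{i+1},\dots,u_m)$ by the monomial $u_i$. Now let $J_1$ denote the $K$-vector space of homogeneous elements of $J$ of degree one. Since $J$ is generated by linear forms, each such generator lies in $J_1$, and hence $J = S\cdot J_1$. On the other hand, $J$ is $\ZZ^n$-graded, so any degree-one element $\sum_p c_p x_p \in J_1$ has each of its terms $c_p x_p$ in $J$; thus $J_1$ is spanned by the variables contained in $J$. Combining the two facts, $J$ is generated by the variables it contains, which is the desired conclusion. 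The content of the argument is entirely in Lemma \ref{Lem:equilinquot1}; the main (and only) point needing care is this last passage from linear forms to variables, which is immediate once one exploits that $J$ is a monomial ideal.
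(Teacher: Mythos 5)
Your proof is correct and follows exactly the route the paper intends: the corollary is stated as an immediate consequence of Lemma \ref{Lem:equilinquot1} (the paper gives no separate proof), using that an equigenerated ideal with linear quotients has linear relations and that a monomial ideal generated by linear forms is generated by variables via its $\ZZ^n$-grading. Your careful handling of the last step is a welcome addition rather than a deviation.
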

	
	\begin{Theorem}\label{Thm:IHS1(I)linQuot}
		Let $I\subset S$ be an equigenerated monomial ideal having linear quotients. Then $\HS_1(I)$ has linear quotients.
	\end{Theorem}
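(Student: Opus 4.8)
The plan is to exhibit an explicit admissible order on the minimal monomial generators of $\HS_1(I)$. Fix an admissible order $u_1,\dots,u_m$ of $I$; by \eqref{eq:HSILinQuot} we have $\HS_1(I)=(u_ix_j:1\le i\le m,\ j\in\set(u_i))$. The first step is to enlarge this generating set by means of Corollary \ref{Cor:Lasagna}. Put $T(u_i)=\{j:x_j\in(u_1,\dots,u_{i-1},u_{i+1},\dots,u_m):u_i\}$, so that $\set(u_i)\subseteq T(u_i)$ and, by the Corollary, $(u_1,\dots,u_{i-1},u_{i+1},\dots,u_m):u_i=(x_j:j\in T(u_i))$. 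If $j\in T(u_i)\setminus\set(u_i)$, then $u_ix_j\in(u_r:r\ne i)$ while $u_ix_j\notin(u_1,\dots,u_{i-1})$, so $u_\ell\mid u_ix_j$ for some $\ell>i$; since $I$ is equigenerated this forces $u_ix_j=u_\ell x_t$, and since $u_i\mid u_\ell x_t$ with $i<\ell$ one gets $t\in\set(u_\ell)$. Hence no new monomials are introduced, and still $\HS_1(I)=(u_ix_j:1\le i\le m,\ j\in T(u_i))$. The point of this enlargement is: whenever $q\in\set(u_i)$ there is $\ell'<i$ with $u_{\ell'}\mid u_ix_q$, and writing $u_ix_q=u_{\ell'}x_t$ one has $t\in T(u_{\ell'})$, because $u_i\mid u_{\ell'}x_t$ and $i\ne\ell'$; thus the monomial $u_ix_q$ occurs as a generator $u_{\ell'}x_t$ with $\ell'<i$.

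For each generator $w$ of $\HS_1(I)$ let $i(w)$ be the least $i$ with $w=u_ix_j$ for some $j\in T(u_i)$, and order the generators by refining ``$w$ precedes $w'$ when $i(w)<i(w')$'' to a total order. The comparisons within a single block (same value of $i$) are immediate, since $u_ix_j:u_ix_{j'}=x_{j'}$ is a variable whenever $j\ne j'$; so the whole issue lies in the cross-block comparisons.

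For these, fix $g=u_ix_j$ with $i(g)=i$ and an earlier generator $u_\ell x_k$ with $\ell<i$. Since $I$ has linear quotients with the chosen order and $\ell<i$, the monomial $u_\ell:u_i$ lies in $(u_1,\dots,u_{i-1}):u_i=(x_p:p\in\set(u_i))$, hence is divisible by some $x_q$ with $q\in\set(u_i)$. Assume first $q\ne j$. Then $\deg_{x_q}(u_\ell x_k)\ge\deg_{x_q}(u_\ell)>\deg_{x_q}(u_i)=\deg_{x_q}(u_ix_j)$, so $x_q$ divides $u_\ell x_k:u_ix_j$; and by the first paragraph $u_ix_q=u_{\ell'}x_t$ with $\ell'<i$, so $i(u_ix_q)\le\ell'<i$ and $u_ix_q$ occurs strictly before $g$, with $(u_ix_q):g=x_q$. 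This is precisely the datum demanded by the linear-quotients criterion, so this comparison is settled.

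The remaining case, which I expect to be the technical heart of the argument, is the degenerate one in which $x_j$ is the \emph{only} variable of $\set(u_i)$ dividing $u_\ell:u_i$, so that $q\ne j$ is impossible and, worse, $x_j$ itself need not divide $u_\ell x_k:u_ix_j$. Writing $u_\ell:u_i=x_j^{\,a}w$ with $x_j\nmid w$, the hypothesis says $\supp(w)\cap\set(u_i)=\emptyset$, and one checks $\supp(u_\ell x_k:u_ix_j)\subseteq\supp(w)\cup\{j,k\}$. One must now produce, for some $p$ in that support, an earlier generator of $\HS_1(I)$ whose colon with $g$ equals $x_p$. I would exploit that $j\in\set(u_i)$, so that $u_ix_j=u_{\ell_0}x_{t_0}$ for some $\ell_0<i$, and apply Corollary \ref{Cor:Lasagna} to $u_{\ell_0}$ and to $u_\ell$ to recognise a suitable monomial $u_ix_jx_p/x_s$ as a generator of $\HS_1(I)$ arising from an index $<i$; the delicate point is to make the choices of $\ell_0$, $p$ and $s$ compatible, and I anticipate this is where the real work lies. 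Once all cross-block comparisons are verified, the order constructed above is admissible, and therefore $\HS_1(I)$ has linear quotients.
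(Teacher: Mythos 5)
Your construction is a genuinely different route from the paper's, which argues by induction on the number of generators, writing $\HS_1(I)=\HS_1(J)+u_mL$ with $J=(u_1,\dots,u_{m-1})$ and appending the new generators $x_{j_t}u_m$ to an admissible order of $\HS_1(J)$. Your version instead builds one explicit global admissible order via the block assignment $i(w)$, and it is sound up to the point where you stop: the enlargement to $T(u_i)$, the within-block comparisons, and the cross-block case $q\ne j$ are all correct. But as written the proof is not complete: the ``degenerate'' case is left as a speculative sketch, with unspecified choices of $\ell_0$, $p$ and $s$ that you yourself flag as the real work. That is a genuine gap in the submission as it stands.

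The gap, however, closes with an observation already contained in your first paragraph: the degenerate case cannot occur. Indeed, suppose $g=u_ix_j$ with $i(g)=i\ge2$ and $j\in\set(u_i)$. Then $x_j\in(u_1,\dots,u_{i-1}):u_i$, so $u_{\ell_0}\mid u_ix_j$ for some $\ell_0<i$; by equigeneration $u_ix_j=u_{\ell_0}x_{t_0}$ for a variable $x_{t_0}$, and $x_{t_0}\in(u_i):u_{\ell_0}\subseteq(u_r:r\ne\ell_0):u_{\ell_0}$ gives $t_0\in T(u_{\ell_0})$, whence $i(g)\le\ell_0<i$, a contradiction. So for a generator $g=u_ix_j$ sitting in block $i\ge2$ one always has $j\in T(u_i)\setminus\set(u_i)$, and therefore every $q\in\set(u_i)$ with $x_q$ dividing $u_\ell:u_i$ automatically satisfies $q\ne j$: your ``good'' case is the only case, and no block-$1$ generator needs a cross-block comparison. (A minor slip elsewhere: within a block, $u_ix_j:u_ix_{j'}=x_j$, not $x_{j'}$; harmless, since either way it is a variable.) With this observation added, your argument is complete, and it has the merit of exhibiting a single admissible order on $G(\HS_1(I))$ rather than constructing one recursively as the paper does.
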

	\begin{proof}
		We proceed by induction on $m\ge1$. For $m=1$ or $m=2$ there is nothing to prove.
		
		Let $m>2$ and set $J=(u_1,\dots,u_{m-1})$. Let $L=(x_i:i\in\textup{set}(u_m),x_iu_m\notin\HS_1(J))$. Then, by equation (\ref{eq:HSILinQuot}),
		$$
		\HS_1(I)=\HS_1(J)+u_mL.
		$$
		By inductive hypothesis, $\HS_1(J)$ has linear quotients. Let $v_1,\dots,v_r$ be an admissible order of $\HS_1(J)$. If $L=(x_{j_1},\dots,x_{j_s})$, we claim that $v_1,\dots,v_r,x_{j_1}u_m,\dots,x_{j_s}u_m$ is an admissible order of $\HS_1(I)$. We only need to show that
		\begin{equation}\label{eq:colonIdeal}
		(v_1,\dots,v_r,x_{j_1}u_m,\dots,x_{j_{t-1}}u_m):x_{j_t}u_m
		\end{equation}
		is generated by variables, for all $t=1,\dots,s$.
		
		Note that each generator $x_{j_\ell}u_m:x_{j_t}u_m=x_{j_\ell}$, with $\ell<t$ is already a variable. Consider now a generator $v_\ell:x_{j_t}u_m$ for some $\ell=1,\dots,r$. Then $v_\ell=x_hu_j$ for some $j<m$ and $h\in\text{set}(u_j)$. Moreover, we can write $x_{j_t}u_m=x_pu_k$ for some $k<m$.
		
		If $j=k$, then
		$$
		v_\ell:x_{j_t}u_m=x_hu_k:x_pu_k=x_h
		$$
		is a variable and there is nothing to prove.
		
		Suppose now $j\ne k$. Since $u_1,\dots,u_{m-1}$ is an admissible order, by Corollary \ref{Cor:Lasagna}
		$$
		Q=(u_1,\dots,u_{k-1},u_{k+1},\dots,u_{m-1}):u_k
		$$
		is generated by variables. Since $j\ne k$ and $j<m$, $u_j:u_k$ belongs to $Q$. Hence, we can find $b<m$, $b\ne k$ such that $u_b:u_k=x_q$ and $x_q$ divides $u_j:u_k$. Thus $x_qu_k\in\HS_1(J)$.
		
		Note that $x_q$ divides also $x_hu_j:x_pu_k$. Indeed $x_q$ divides $u_j:u_k$. If $x_q$ does not divide $x_hu_j:x_pu_k$, then necessarily $p=q$. But this would imply that $x_{j_t}u_m=x_qu_k\in\HS_1(J)$, against the fact that $x_{j_t}\in L$. Hence $x_q$ divides $x_hu_j:x_pu_k$. But
		$$
		x_qu_k:x_{j_t}u_m=x_qu_k:x_pu_k=x_q
		$$
		belongs to the ideal (\ref{eq:colonIdeal}). Hence $x_hu_j:x_pu_k$ is divided by a variable belonging to the ideal (\ref{eq:colonIdeal}). This concludes our proof.
	\end{proof}
	
	It is natural to ask the following question. Let $I\subset S$ be a monomial ideal having a linear resolution. Is it true that $\HS_1(I)$ has a linear resolution, too?
	
	Theorem \ref{Thm:IHS1(I)linQuot} is no longer valid for monomial ideals with linear quotients generated in more than one degree, as next example of Bayati et all shows \cite{BJT019}.
	\begin{Example}\label{Ex:HS1notLinQuot}
		\rm (\cite[Example 3.3]{BJT019}). Let $I=\left(x_{1}^{2},\,x_{1}x_{2},\,x_{2}^{4},\,x_{1}x_{3}^{4},\,x_{1}x_{3}^{3}x_{4},\,x_{1}x_{3}^{2}x_{4}^{2}\right)$ be an ideal of $S=K[x_1,x_2,x_3,x_4]$. $I$ is a (strongly) stable ideal whose Borel generators are $x_1x_2,x_2^4,x_1x_3^2x_4^2$. It is well-known that stable ideals have linear quotients. Thus $I$ has linear quotients. Using \textit{Macaulay2} \cite{GDS} the package \cite{F2Pack}, we verified that
		\begin{align*}
		\HS_1(I)\ =\ \big(&x_{1}^{2}x_{2},\,x_{1}x_{2}^{4},\,x_{1}x_{3}^{3}x_{4}^{2},\,x_{1}x_{2}x_{3}^{2}x_{4}^{2},\,x_{1}^{2}x_{3}^{2}x_{4}^{2},\,x_{1}x_{3}^{4}x_{4},\\&x_{1}x_{2}x_{3}^{3}x_{4},\,x_{1}^{2}x_{3}^{3}x_{4},\,x_{1}x_{2}x_{3}^{4},\,x_{1}^{2}x_{3}^{4}\big)
		\end{align*}
		has the following Betti table
		$$\begin{array}{c|cccc}
		& 0 & 1 & 2 & 3\\ \hline
		3 & 1 & . & . & .\\
		4 & . & . & . & .\\
		5 & 1 & 1 & . & .\\
		6 & 8 & 15 & 8 & 1\\
		7 & . & . & . & .\\
		8 & . & 3 & 5 & 2
		\end{array}$$
		We show that $\HS_1(I)$ does not have linear quotients. Suppose by contradiction that $\HS_1(I)$ has linear quotients. Then, since the Betti numbers of an ideal with linear quotients do not depend upon the characteristic of the underlying field $K$, we may assume that $K$ has characteristic zero. Hence $\HS_1(I)$ would be componentwise linear, see \cite[Corollary 8.2.21]{JT}. However, this cannot be the case by virtue of \cite[Theorems 8.2.22. and 8.2.23(a)]{JT}. Indeed $\beta_{1,1+8}(\HS_1(I))\ne0$, while $\beta_{0,8}(\HS_1(I))=0$.
	\end{Example}
	
	\section{Homological shifts of proper interval graphs}
	
	Let $G$ be a finite simple graph with vertex set $V(G)=[n]$ and edge set $E(G)$. Let $K$ be a field. The \textit{edge ideal} of $G$ is the squarefree monomial ideal $I(G)$ of $S=K[x_1,\dots,x_n]$ generated by the monomials $x_ix_j$ such that $\{i,j\}\in E(G)$. A graph $G$ is \textit{complete} if every $\{i,j\}$ with $i,j\in[n]$, $i\ne j$, is an edge of $G$. The \textit{open neighbourhood} of $i\in V(G)$ is the set
	$$
	N_G(i)=\big\{j\in V(G):\{i,j\}\in E(G)\big\}.
	$$
	
	A graph $G$ is called \textit{chordal} if it has no induced cycles of length bigger than three. Recall that a \textit{perfect elimination order} of $G$ is an ordering $v_1,\dots,v_n$ of its vertex set $V(G)$ such that $N_{G_i}(v_i)$ induces a complete subgraph on $G_i$, where $G_i$ is the induced subgraph of $G$ on the vertex set $\{i,i+1,\dots,n\}$. Hereafter, if $1,2,\dots,n$ is a perfect elimination order of $G$, we denote it by $x_1>x_2>\dots>x_n$.
	
	\begin{Theorem}
		\textup{(Dirac).} A simple finite graph $G$ is chordal if and only if $G$ admits a perfect elimination order.
	\end{Theorem}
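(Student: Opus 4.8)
The plan is to prove the two implications separately, the direction ``chordal $\Rightarrow$ perfect elimination order'' being the substantial one.

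For the easy direction, suppose $G$ admits a perfect elimination order, which after relabelling we may write as $x_1>x_2>\cdots>x_n$, so that $N_{G_i}(i)$ induces a complete subgraph of $G_i$ for every $i$. If $G$ contained an induced cycle $C$ of length at least four, let $i$ be the smallest vertex lying on $C$. Then the two neighbours of $i$ along $C$, say $j$ and $k$, both lie in $G_i$ and hence in $N_{G_i}(i)$, so $\{j,k\}\in E(G)$; since $C$ has length $\ge 4$, the vertices $j$ and $k$ are non-consecutive on $C$, so $\{j,k\}$ is a chord of $C$, contradicting that $C$ is induced. Hence $G$ is chordal.

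For the hard direction I would argue by induction on $|V(G)|$, the engine being the existence of a \emph{simplicial vertex}, i.e.\ a vertex $v$ with $N_G(v)$ inducing a complete subgraph. Granting this, pick such a $v$; the graph $G-v$ is an induced subgraph of $G$, hence chordal, and by induction it has a perfect elimination order; prepending $v$ to it yields a perfect elimination order of $G$, since $N_G(v)$ is a clique and the remaining conditions already hold in $G-v$. So everything reduces to the claim that every chordal graph $G$ which is not complete has two non-adjacent simplicial vertices (a complete graph obviously has simplicial vertices). The key structural fact here is that in a chordal graph every inclusion-minimal vertex separator induces a clique: if $a,b$ are non-adjacent and $S$ is minimal with $a,b$ in different components of $G-S$, then by minimality each vertex of $S$ has a neighbour in the component $A$ of $a$ and in the component $B$ of $b$; for $x,y\in S$, choosing shortest $x$--$y$ paths through $A$ and through $B$ produces a chordless cycle of length $\ge 4$ unless $\{x,y\}\in E(G)$, forcing $S$ to be complete. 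With this in hand I would prove the claim by induction on $|V(G)|$: assuming $G$ connected and not complete (the disconnected case being handled componentwise), take non-adjacent $a,b$, a minimal separator $S$, and the components $A\ni a$, $B\ni b$ of $G-S$. The induced subgraph $G[A\cup S]$ is chordal; if it is complete then $a$ is simplicial in it, and since $A$ is a full component of $G-S$ we have $N_G(a)\subseteq A\cup S$, so $a$ is simplicial in $G$; if $G[A\cup S]$ is not complete, induction gives two non-adjacent simplicial vertices of $G[A\cup S]$, and because $S$ is a clique at least one of them lies in $A$ and is therefore simplicial in $G$. Either way $A$ contains a vertex that is simplicial in $G$; symmetrically so does $B$; and these two vertices, lying in distinct components of $G-S$, are non-adjacent.

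The main obstacle is the separator lemma together with the bookkeeping that the vertex found simplicial in $G[A\cup S]$ remains simplicial in the whole of $G$ --- this is exactly where one uses that $A$ is a \emph{connected component} of $G-S$, so that no neighbour of a vertex of $A$ escapes $A\cup S$. The cycle-chasing that shows $S$ is complete also needs a little care: one must take shortest paths on each side, so that the cycle produced is chordless except possibly for the single edge $\{x,y\}$ whose presence is what we are trying to establish.
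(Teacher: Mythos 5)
The paper states Dirac's theorem as a classical result with a citation to Dirac's original article and gives no proof of its own, so there is no in-paper argument to compare yours against. Your proposal is the standard and correct proof: the easy direction by taking the earliest vertex of an induced long cycle in the elimination order, and the hard direction by induction once one knows that every non-complete chordal graph has two non-adjacent simplicial vertices, which you derive from the fact that minimal vertex separators in chordal graphs are cliques. The two points that genuinely require care --- choosing the $x$--$y$ paths through $A$ and through $B$ to be shortest so that the resulting cycle has no chord other than possibly $\{x,y\}$, and the observation that a vertex of the component $A$ that is simplicial in $G[A\cup S]$ remains simplicial in $G$ because all its neighbours lie in $A\cup S$ --- are both correctly identified and correctly handled in your sketch, and the induction applied to $G[A\cup S]$ is legitimate since $B\neq\emptyset$ makes that graph strictly smaller. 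I see no gap.
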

	
	The \textit{complementary graph} $G^c$ of $G$ is the graph with vertex set $V(G^c)=V(G)$ and where $\{i,j\}$ is an edge of $G^c$ if and only if $\{i,j\}\notin E(G)$. A graph $G$ is called \textit{cochordal} if and only if $G^c$ is chordal.
	
	\begin{Theorem}\label{Thm:Froberg}
		\textup{(Fr\"oberg).} Let $G$ be a simple finite graph. Then, $I(G)$ has a linear resolution if and only if $G$ is cochordal.
	\end{Theorem}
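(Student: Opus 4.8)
The plan is to pass to the Stanley--Reisner dictionary and reduce the statement to a topological fact about clique complexes. Since $I(G)$ has no generator of degree one, the Stanley--Reisner complex $\Delta$ with $I_\Delta=I(G)$ has vertex set $[n]$, and its faces are exactly the independent sets of $G$, equivalently the cliques of $G^c$; thus $\Delta=\Cl(G^c)$ and, for every $W\subseteq[n]$, the induced subcomplex $\Delta_W$ equals $\Cl\bigl((G^c)_W\bigr)$. I would then apply Hochster's formula in the form
\[
\beta_{i,\,i+j}\bigl(I(G)\bigr)\;=\;\sum_{\substack{W\subseteq[n]\\ |W|=i+j}}\dim_K\widetilde{H}_{j-2}\bigl(\Delta_W;K\bigr).
\]
Because $I(G)$ is generated in degree $2$, it has a linear resolution precisely when $\beta_{i,\,i+j}(I(G))=0$ for all $j\ne2$, and by the formula this is equivalent to $\widetilde{H}_k(\Delta_W;K)=0$ for every $W$ and every $k\ge1$. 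Hence everything reduces to showing that $G^c$ is chordal if and only if the clique complex of each of its induced subgraphs has vanishing reduced homology in positive degrees.

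For the forward implication, I would use that induced subgraphs of chordal graphs are chordal, so it suffices to prove $\widetilde{H}_k(\Cl(H);K)=0$ for $k\ge1$ when $H$ is chordal, and I would induct on $|V(H)|$. By Dirac's theorem $H$ has a simplicial vertex $v$, i.e.\ $N_H(v)$ is a clique; then $\Cl(H)=\Cl(H\setminus v)\cup\star(v)$, where $\star(v)$ is a cone with apex $v$ over the full simplex on $N_H(v)$, hence contractible, while $\Cl(H\setminus v)\cap\star(v)$ is exactly that simplex $\lk(v)$, again contractible. Mayer--Vietoris then yields $\widetilde{H}_k(\Cl(H))\cong\widetilde{H}_k(\Cl(H\setminus v))$ for $k\ge1$, and the induction closes with a single vertex as base case. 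An alternative more in line with the rest of the paper is to use the perfect elimination order of $G^c$ supplied by Dirac's theorem to build, by induction on the number of vertices, an admissible order on the generators of $I(G)$, so that linear quotients forces a linear resolution.

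For the converse, suppose $G^c$ is not chordal and fix an induced cycle $C_m$, $m\ge4$, on a vertex set $W$. Since $C_m$ has no chord, $\Delta_W=\Cl(C_m)=C_m$ as a one-dimensional complex, which is homotopy equivalent to a circle, so $\widetilde{H}_1(\Delta_W;K)\cong K\ne0$. As $|W|=m$, the term indexed by $W$ in Hochster's formula forces $\beta_{m-3,\,m}(I(G))\ne0$; since $m\ne(m-3)+2$, the minimal free resolution of $I(G)$ is not linear.

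The step I expect to carry the real content is the topological one: that clique complexes of chordal graphs are acyclic in positive degrees. Hochster's formula, the behaviour of reduced homology under restriction to induced subcomplexes, and the circle computation for $C_m$ are routine bookkeeping; the essential inputs are Dirac's theorem and the Mayer--Vietoris reduction along a simplicial vertex. On the linear-quotients route, the analogous delicate point is checking that the edge order induced by a perfect elimination order of $G^c$ is genuinely admissible, that is, that each successive colon ideal is generated by variables.
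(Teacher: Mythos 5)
Your argument is correct, but note that the paper does not prove this statement at all: it is Fr\"oberg's theorem, quoted with a citation to \cite{Froberg88}, so there is no internal proof to compare against. What you give is essentially the standard proof. The translation $I_\Delta=I(G)$ with $\Delta$ the independence complex of $G$ (equivalently the clique complex of $G^c$), the identification $\Delta_W=\Cl((G^c)_W)$, and the Hochster indexing $\beta_{i,i+j}=\sum_{|W|=i+j}\dim_K\widetilde{H}_{j-2}(\Delta_W;K)$ are all right, and your observation that the absence of degree-one generators kills the $\widetilde{H}_{-1}$ terms correctly reduces linearity to $\widetilde{H}_k(\Delta_W;K)=0$ for $k\ge 1$. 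The Mayer--Vietoris step is sound: for a simplicial vertex $v$ the closed star is the full simplex on $N_H(v)\cup\{v\}$ and meets $\Cl(H\setminus v)$ in the full simplex on $N_H(v)$ (handling $N_H(v)=\emptyset$ separately only affects $\widetilde H_0$, which you do not need). The converse via an induced $C_m$, $m\ge4$, giving $\beta_{m-3,m}\ne0$ is also correct, and since every homology group you compute is independent of $K$, the equivalence holds over any field, as the statement requires. Your alternative linear-quotients route for the ``if'' direction is the one the paper implicitly relies on later (it asserts that the lexicographic order induced by a perfect elimination order of $G^c$ is an admissible order for $I(G)$); making that admissibility check explicit is exactly the Case~1 computation in the proof of Lemma \ref{Lemma:setChordal}, so that route would also integrate cleanly with the rest of the paper.
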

	
	It is known by \cite[Theorem 10.2.6]{JT} that $I(G)$ has linear resolution if and only if it has linear quotients. The theorems of Dirac and Fr\"oberg classify all edge ideals with linear quotients. Furthermore if $x_1>x_2>\dots>x_n$ is a perfect elimination order of $G^c$, then $I(G)$ has linear quotients with respect to the lexicographic order $>_{\textup{lex}}$ induced by $x_1>x_2>\dots>x_n$.
	
	Now we turn to the homological shifts of edge ideals with linear quotients. Unfortunately, in general an edge ideal with linear quotients does not even has homological linear resolution as next example shows.
	\begin{Example}\label{Ex:I(G)NotHLQ}
		\rm Let $G$ be the following cochordal graph on six vertices.\bigskip
		
		\begin{center}
			\begin{tikzpicture}[scale=0.8]
			\filldraw (4,2) circle (2pt) node[above]{1};
			\filldraw (2,0.5) circle (2pt) node[below]{2};
			\filldraw (4,0.5) circle (2pt) node[below]{3};
			\filldraw (6,0.5) circle (2pt) node[above]{4};
			\filldraw (5,-1) circle (2pt) node[below]{5};
			\filldraw (7,-1) circle (2pt) node[below]{6};
			\draw[-] (4,2) -- (2,0.5);
			\draw[-] (4,2) -- (4,0.5);
			\draw[-] (4,2) -- (6,0.5);
			\draw[-] (6,0.5) -- (5,-1);
			\draw[-] (6,0.5) -- (7,-1);
			\end{tikzpicture}
		\end{center}\vspace{0.2cm}
		
		Let $I=I(G)\subset S=K[x_1,\dots,x_6]$. Using the package \cite{F2Pack} we verified that $\HS_0(I)$ and $\HS_1(I)$ have linear quotients. However the last homological shift ideal $\HS_2(I)=(x_{1}x_{2}x_{3}x_{4},\,x_{1}x_{4}x_{5}x_{6})$ has the following non-linear resolution
		$$
		0\rightarrow S(-6)\rightarrow S(-4)^2\rightarrow (x_1x_2x_3x_4,x_1x_4x_5x_6)\rightarrow0.
		$$
	\end{Example}\medskip
	
	In graph theory, one distinguished class of chordal graphs is the family of \textit{proper interval graphs}. A graph $G$ is called an \textit{interval graph}, if one can label its vertices with some intervals on the real line so that two vertices are adjacent in $G$, when the intersection of their corresponding intervals is non-empty. A \textit{proper interval graph} is an interval graph such that no interval properly contains another.
	
	Now we are ready to state our main result in the section.
	\begin{Theorem}\label{Thm:PropIntGr}
		Let $G$ be a cochordal graph on $[n]$ whose complementary graph $G^c$ is a proper interval graph. Then, $I(G)$ has homological linear quotients.
	\end{Theorem}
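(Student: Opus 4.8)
The plan is to argue by induction on $n=|V(G)|$, extracting everything from the combinatorial structure of proper interval graphs. Fix the labeling $V(G^c)=[n]$ coming from a unit interval representation, so that $G^c$ satisfies the \emph{umbrella condition}: if $i<j<k$ and $\{i,k\}\in E(G^c)$, then $\{i,j\},\{j,k\}\in E(G^c)$. This labeling is a perfect elimination order of $G^c$, and, since the umbrella condition is symmetric under reversal, so is $n>n-1>\dots>1$; this symmetry is what I would abstract into the property that $G^c$ is \emph{reversible}, and the whole argument will only use reversibility together with chordality. By the theorems of Dirac and Fr\"oberg, $I=I(G)$ has linear quotients with respect to the lexicographic order induced by $x_1>\dots>x_n$; call $u_1>u_2>\dots>u_m$ the resulting admissible order. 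A short computation, which I would isolate as a lemma, gives for every edge $\{a,b\}\in E(G)$ with $a<b$
$$
\textup{set}(x_ax_b)\ =\ [a-1]\,\cup\,[r_a+1,b-1],
$$
where $[l_a,r_a]=N_{G^c}[a]$ is the interval provided by the umbrella condition (so $l_a\le a$, and $r_a<b$ because $\{a,b\}\notin E(G^c)$). Together with \eqref{eq:HSILinQuot} this describes the generators of each $\HS_k(I)$ explicitly.

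For the inductive step set $G'=G-n$. Then $(G')^c=G^c-n$ is again a proper interval graph, reversible with the inherited labeling, and $N_G(n)=[t]$ for some $t\ge0$, because $N_{G^c}[n]$ is an interval ending at $n$. Since every new edge of $G$ contains the largest variable $x_n$, adding $n$ does not change $\textup{set}(x_ax_b)$ for the edges $\{a,b\}$ of $G'$, so
$$
\HS_k(I(G))\ =\ \HS_k(I(G'))\ +\ \bigl(x_ax_n\mathbf{x}_A\ :\ a\in[t],\ A\subseteq[a-1]\cup[r_a+1,n-1],\ |A|=k\bigr).
$$
If $t=0$ then $I(G)=I(G')$ and we are done by induction. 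Otherwise, by induction $\HS_k(I(G'))$ has an admissible order $v_1,\dots,v_r$, and I would claim that $v_1,\dots,v_r$ followed by the (deduplicated) new generators, ordered first by increasing $a$ and then by a fixed ordering of the sets $A$, is an admissible order of $\HS_k(I(G))$. The colon of a new generator $w=x_ax_n\mathbf{x}_A$ by an earlier new generator with the same $a$ is a variable exactly as in the proof of Theorem \ref{Thm:IHS1(I)linQuot}; the colon by a new generator with $a'<a$ is controlled using the monotonicity $r_1\le r_2\le\dots\le r_t$, which follows from the umbrella condition applied to the pairs $a'<a$.

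The main obstacle is the mixed case: given a new generator $w=x_ax_n\mathbf{x}_A$ and a generator $v=x_{a'}x_{b'}\mathbf{x}_{A'}$ of $\HS_k(I(G'))$ with $v:w$ not already a variable, one must produce a variable lying in the colon ideal of the generators preceding $w$ and dividing $v:w$. This splits according to whether $a'=a$, $a'<a$ or $a'>a$, and in each subcase the needed variable has to be located inside $\textup{set}(x_ax_n)=[a-1]\cup[r_a+1,n-1]$: the half $[r_a+1,n-1]$ is reached via the umbrella condition for pairs with smaller index $a$ (as in Theorem \ref{Thm:IHS1(I)linQuot}), while the half $[a-1]$ forces one to use the umbrella condition for pairs with larger index, that is, genuine reversibility rather than merely the perfect elimination order. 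I expect this bookkeeping, together with checking that the new generators are indeed minimal generators of $\HS_k(I(G))$, to be the technical heart of the proof; the supporting facts (the formula for $\textup{set}$, its stability under adding the last vertex, and the monotonicity of the $r_a$) are all elementary consequences of the umbrella condition. An alternative to the induction would be to write down a single admissible order of $\HS_k(I(G))$ directly from the explicit generator description, ordering the $x_ax_b\mathbf{x}_A$ lexicographically in $(a,b)$ and then by a suitable order on $A$; the verification of the colon ideals would again rest on the same umbrella/reversibility arguments.
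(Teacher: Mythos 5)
Your setup is sound and runs closely parallel to the paper's: abstracting the umbrella condition of the Looges--Olariu labeling into \emph{reversibility} of the perfect elimination order is exactly Lemma \ref{Lem:PI=>Rev}, your formula $\textup{set}(x_ax_b)=[a-1]\cup[r_a+1,b-1]$ is the proper-interval specialization of Lemma \ref{Lemma:setChordal}, and the paper likewise proves the statement by induction on $n$ after peeling off an extreme vertex (it removes vertex $1$ rather than $n$ and gets the decomposition of $\HS_k(I(G))$ from a Betti splitting). The problem is that the proof stops exactly where the proof begins in earnest: the verification that your proposed order is admissible --- above all the mixed case of a new generator $x_ax_n\mathbf{x}_A$ coloned against the generators of $\HS_k(I(G'))$, and the case of two new generators with different first indices --- is only announced as ``the technical heart'' and never carried out. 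That verification is essentially the entirety of the paper's proof of Theorem \ref{Thm:HSI(G)LinQuot}: four cases with subcases, and in the delicate ones (Subcases 3.2 and 4.2) one must re-present a generator $u=\mathbf{x}_A\mathbf{x}_B$ with respect to the \emph{reversed} elimination order via Lemma \ref{Lemma:setChordal} and exchange a whole tail of variables $x_{p_{s+1}},\dots,x_{p_{k+2}}$ at once, not a single one. Correctly locating where reversibility must enter is not the same as exhibiting, in each case, the generator $w$ with $w:v=x_p$ and $x_p\mid u:v$.

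A second, related point: your admissible order is not the paper's, so even the casework the paper does supply would not transfer verbatim. The paper orders \emph{all} of $G(\HS_k(I(G)))$ by the one lexicographic order induced by $x_1>\dots>x_n$, which is what lets the inductive hypothesis be invoked directly and organizes the cases by whether $x_1$ divides $u$ and $v$; you instead propose a block order (all of $\HS_k(I(G'))$ first, then the new generators grouped by increasing $a$). This may well be repairable, but it needs its own proof. For instance, for two new generators $x_{a'}x_n\mathbf{x}_{A'}$ and $w=x_ax_n\mathbf{x}_A$ with $a'<a\notin A'$, the natural witness $x_{a'}w/x_a=x_{a'}x_n\mathbf{x}_A$ need not be a generator \emph{via the pair} $(a',n)$, since $A\subseteq[a-1]\cup[r_a+1,n-1]$ may meet the interval $(a',r_{a'}]$, which is excluded from $\textup{set}(x_{a'}x_n)$; monotonicity of the $r_a$ alone does not close this case, and one has to find another representation of that monomial (or another witness altogether), which is precisely the kind of argument Subcases 3.2 and 4.2 of the paper are built to handle.
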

	
	In order to prove the theorem we introduce a more general class of graphs.
	
	We call a perfect elimination order $x_1>x_2>\dots>x_n$ of a chordal graph $G$ \textit{reversible} if $x_n>x_{n-1}>\dots>x_1$ is also a perfect elimination order of $G$. We call a chordal graph $G$ \textit{reversible} if $G$ admits a reversible perfect elimination order. Moreover, a cochordal graph $G$ is called reversible if and only if $G^c$ is reversible.
	
	\begin{Lemma}\label{Lem:PI=>Rev}
		Let $G$ be a proper interval graph. Then $G$ is reversible.
	\end{Lemma}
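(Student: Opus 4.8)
The plan is to read the required ordering off an interval representation. Since $G$ is a proper interval graph, fix closed intervals $I_v=[a_v,b_v]$, $v\in V(G)$, whose intersection graph is $G$ and with no $I_v$ properly contained in another. First I would note that, under this properness assumption, if $a_u<a_v$ then necessarily $b_u\le b_v$ (otherwise $I_v\subsetneq I_u$); consequently one may relabel $V(G)=[n]$ so that $a_1\le a_2\le\dots\le a_n$ and, simultaneously, $b_1\le b_2\le\dots\le b_n$ (sort by left endpoint, breaking ties by right endpoint, and observe that the right endpoints then come out nondecreasing). The candidate reversible perfect elimination order will be $x_1>x_2>\dots>x_n$.

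The one combinatorial input I would establish is the \emph{umbrella property} of this labeling: if $\{i,k\}\in E(G)$ and $i<j<k$, then both $\{i,j\}$ and $\{j,k\}$ lie in $E(G)$. This is immediate from the endpoint inequalities: $\{i,k\}\in E(G)$ gives $a_k\le b_i$; then $a_j\le a_k\le b_i$ together with $a_i\le a_j\le b_j$ yields $I_i\cap I_j\ne\emptyset$, and $a_k\le b_i\le b_j$ together with $a_j\le a_k\le b_k$ yields $I_j\cap I_k\ne\emptyset$.

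Granting this, both statements follow at once. To see that $x_1>\dots>x_n$ is a perfect elimination order, fix $i$ and let $G_i$ be the induced subgraph on $\{i,i+1,\dots,n\}$; if $j,k\in N_{G_i}(i)$ with $i<j<k$, then $\{i,k\}\in E(G)$ and the umbrella property give $\{j,k\}\in E(G)$, so $N_{G_i}(i)$ spans a complete graph. For the reverse order $x_n>\dots>x_1$, fix $i$ and consider the induced subgraph on $\{1,\dots,i\}$; if $j,k$ are neighbours of $i$ in this subgraph with $j<k<i$, then $\{j,i\}\in E(G)$ and the umbrella property applied to the triple $j<k<i$ give $\{j,k\}\in E(G)$, so again the relevant neighbourhood is complete. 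Hence $x_1>\dots>x_n$ is a perfect elimination order of $G$ whose reverse is also one, i.e., $G$ is reversible. I do not anticipate a genuine obstacle: the only delicate point is the observation that properness forces the left-endpoint order and the right-endpoint order to coincide, which is exactly what makes the umbrella computation go through; alternatively one could simply invoke the classical characterization of proper interval graphs by the existence of such a vertex ordering (an \emph{indifference ordering}) and pass directly to the last step.
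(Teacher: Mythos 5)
Your proof is correct and follows essentially the same route as the paper: both arguments hinge on a labeling of $V(G)$ with the umbrella property (the paper's condition $(*)$: $\{i,j\}\in E(G)$ with $i<j$ forces $\{i,\dots,j\}$ to induce a clique), from which the forward and reverse perfect elimination orders are verified in the identical way. The only difference is that the paper obtains this labeling by citing Looges--Olariu, whereas you derive it directly from the endpooints of a proper interval representation, making the step self-contained.
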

	\begin{proof}
		By \cite[Theorem 1 and Lemma 1]{LO93}, up to a relabeling of the vertex set of $G$, the following property is satisfied:
		\begin{enumerate}
			\item[$(*)$] for all $i<j$, $\{i,j\}\in E(G)$ implies that the induced subgraph of $G$ on $\{i,i+1\dots,j\}$ is a \textit{clique}, \emph{i.e.}, a complete subgraph.
		\end{enumerate}
	With such a labeling, both $x_1>x_2>\dots>x_n$ and $x_n>x_{n-1}>\dots>x_1$ are perfect elimination orders of $G$. By symmetry, it is enough to show that $x_1>x_2>\dots>x_n$ is a perfect elimination order. Let $i\in[n]$, $j,k\in N_G(i)$ with $j,k>i$. We prove that $\{j,k\}\in E(G)$. Suppose $j>k$. By $(*)$, the induced subgraph of $G$ on $\{i,i+1\dots,j\}$ is a clique. Since $j>k>i$, we obtain that $\{j,k\}\in E(G)$, as wanted. 
	\end{proof}
	
	With this lemma at hand, Theorem \ref{Thm:PropIntGr} follows from the following more general result.
	
	\begin{Theorem}\label{Thm:HSI(G)LinQuot}
		Let $G$ be a cochordal graph on $[n]$, and let $x_1>\dots>x_n$ be a reversible perfect elimination order of $G^c$. Then, $\HS_k(I(G))$ has linear quotients with respect to the lexicographic order $>_{\textup{lex}}$ induced by $x_1>\dots>x_n$, for all $k\ge0$.
	\end{Theorem}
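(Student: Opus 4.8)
The plan is first to make the minimal generators of $\HS_k(I(G))$ completely explicit, and then to verify that $>_{\textup{lex}}$ itself is an admissible order by a pairwise exchange argument, the only hard case being handled via reversibility.

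By the remark following Theorem \ref{Thm:Froberg}, the generators $u_1>_{\textup{lex}}u_2>_{\textup{lex}}\cdots$ of $I(G)$, listed in decreasing lexicographic order, form an admissible order. Computing the colon ideals $(u_1,\dots,u_{i-1}):u_i$ directly and applying the defining clique property of the perfect elimination order $x_1>\cdots>x_n$ of $G^c$ at a vertex $p<i$ (which gives $\{p,i\}\in E(G)$ or $\{p,j\}\in E(G)$ whenever $\{i,j\}\in E(G)$ and $p<i<j$), one obtains, for an edge $\{i,j\}$ with $i<j$,
$$
\textup{set}(x_ix_j)\ =\ \{1,\dots,i-1\}\ \cup\ \big(N_G(i)\cap\{i+1,\dots,j-1\}\big).
$$
Substituting this into $(\ref{eq:HSILinQuot})$ and regrouping, the minimal generators of $\HS_k(I(G))$ turn out to be exactly the squarefree monomials
$$
x_i\,{\bf x}_B\,{\bf x}_D,
$$
where $B\subseteq\{1,\dots,i-1\}$, $\emptyset\ne D\subseteq N_G(i)\cap\{i+1,\dots,n\}$, and $|B|+|D|=k+1$; the underlying edge is $\{i,\max D\}$. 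All of these have support of size $k+2$, so they are pairwise incomparable, hence all minimal. I will call such an $i$ a \emph{center} of the monomial (it need not be unique).

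Now fix two generators $w>_{\textup{lex}}w'$ and put $q=\min\big(\supp(w)\triangle\supp(w')\big)$, so $q\in\supp(w)\setminus\supp(w')$. It suffices to produce a generator $w''$ with $w''>_{\textup{lex}}w'$ and $w''\mid x_qw'$: then $w'':w'=x_q$ divides $w:w'$ and lies in the colon ideal $(\text{generators }>_{\textup{lex}}w'):w'$, as required. Writing $w'=x_{i'}\,{\bf x}_{B'}\,{\bf x}_{D'}$ with center $i'$, I build $w''$ by inserting $q$ into this expression and deleting one element $>q$. If $q<i'$, adjoin $q$ to $B'$ and delete $\max D'$; when $D'$ is a singleton, instead delete $i'$ or $\max D'$ and make $q$ the new center, the clique property of the elimination order at $q$ guaranteeing $\{q,i'\}\in E(G)$ or $\{q,\max D'\}\in E(G)$, so $w''$ is again admissible. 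If $q>i'$ and $q\in N_G(i')$, adjoin $q$ to $D'$ and delete an element of $D'$ larger than $q$; such an element must exist, since otherwise $\supp(w')$ would lie entirely below $q$, forcing $\supp(w')\subseteq\supp(w)$ and hence $\supp(w')=\supp(w)$, a contradiction. In each of these cases $w''$ is a valid generator, divides $x_qw'$, and is lex-larger than $w'$.

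The remaining case, $q>i'$ with $q\notin N_G(i')$, is the crux, and I expect it to require the most care: here $q$ cannot be attached as a forward neighbour of $i'$, so a valid $w''$ must use a center other than $i'$. This is the only place where reversibility is used. The reverse perfect elimination order $x_n>\cdots>x_1$ of $G^c$ supplies the ``backward'' clique property — for an edge $\{a,b\}$ with $a,b<l$ one has $\{l,a\}\in E(G)$ or $\{l,b\}\in E(G)$ — and this, used together with the forward clique property (which pins down the positions of $q$ and of $i'$ inside $w$), is exactly what lets one relocate the center of $w'$, or select a different element to delete from $x_qw'$, so that the resulting monomial is again admissible and still lex-larger than $w'$. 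That reversibility is genuinely indispensable is shown by Example \ref{Ex:I(G)NotHLQ}, where $G^c$ is chordal but admits no reversible perfect elimination order and $I(G)$ fails to have homological linear resolution. An equivalent bookkeeping runs by induction on $n$: deleting the vertex $1$ gives
$$
\HS_k(I(G))\ =\ \HS_k(I(G\setminus 1))\ +\ x_1\HS_{k-1}(I(G\setminus 1))\ +\ x_1\,V^{[k+1]},
$$
where $V^{[k+1]}$ is the squarefree Veronese ideal of degree $k+1$ in $\{x_j:j\in N_G(1)\}$; one checks that $G^c\setminus 1$ again carries a reversible perfect elimination order, the colons of the generators not divisible by $x_1$ follow from the inductive hypothesis together with the observation that $x_1$ lies in the pertinent colon ideal (the case $k=0$ using the clique property at vertex $1$), and the colons of the $x_1$-multiples reduce to the assertion that $\HS_{k-1}(I(G\setminus 1))+V^{[k+1]}$ has linear quotients with respect to $>_{\textup{lex}}$ — which is again the heart of the matter and again rests on reversibility.
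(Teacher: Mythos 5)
Your setup coincides with the paper's: the formula for $\textup{set}(x_ix_j)$ and the resulting description of $G(\HS_k(I(G)))$ are exactly Lemma \ref{Lemma:setChordal}, and the reduction to producing, for each pair $w>_{\textup{lex}}w'$, a witness $w''=x_qw'/x_\ell$ with $\ell>q=\min\big(\supp(w)\,\triangle\,\supp(w')\big)$ is also the paper's. Your easy cases correspond to the paper's Subcases 3.1 and 4.1 and are essentially fine, modulo one slip: when $q<i'$ and $D'$ is a singleton, ``making $q$ the new center'' requires $q$ to be adjacent in $G$ to every element of $\supp(w')$ above $q$ that survives the deletion, which is not guaranteed if $B'$ contains elements between $q$ and $i'$. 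The repair is to first delete such an element of $B'$ (keeping center $i'$); only when all of $B'$ lies below $q$ does the clique property at $q$ make $x_qw'/x_{i'}$ or $x_qw'/x_{\max D'}$ a generator, and there one also needs the observation that $q$ is then the largest element of $B'\cup\{q\}$.

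The decisive gap is the case $q>i'$ with $\{i',q\}\notin E(G)$. This is the entire mathematical content of the theorem: it is the only place the reversibility hypothesis enters, and without it the statement is false. You do not prove it; you only announce that the backward clique property ``is exactly what lets one relocate the center.'' The paper's Subcases 3.2 and 4.2 show what is actually required: one rewrites the lex-\emph{larger} generator $w={\bf x}_{\{p_1,\dots,p_{k+2}\}}$ in the \emph{reversed}-order normal form of Lemma \ref{Lemma:setChordal}, as ${\bf x}_A{\bf x}_B$ with $A=\{p_r,\dots,p_{k+2}\}$, $B=\{p_1,\dots,p_{r-1}\}$ and $\min(A)=p_r$ adjacent to all of $B$; one deduces from $\{i',q\}\notin E(G)$ that the disagreement position $s$ (with $q=p_s$) satisfies $s>r$; and one then takes $w''=x_{q_{s+1}}\cdots x_{q_{k+2}}\,w/(x_{p_{s+1}}\cdots x_{p_{k+2}})$, i.e.\ replaces the whole tail of $w$ above position $s$ by the corresponding tail of $w'$. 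The swap is legitimate precisely because the replaced elements all lie in $A$ strictly above the reversed-order center $p_r$, which is untouched; none of this inference appears in your sketch, and it is not a routine consequence of the clique properties you list. Your closing ``equivalent bookkeeping by induction on $n$'' (which is in fact the paper's framework, via the Betti splitting $I(G)=x_1J+I(G')$ and \cite[Proposition 1.7]{CF}) again defers exactly this step. As written, the proposal proves the theorem only in the cases where reversibility is not needed.
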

	
	For the proof of this theorem, we need a description of the homological shift ideals.
	
	\begin{Lemma}\label{Lemma:setChordal}
		Let $G$ be a cochordal graph on $[n]$, and let $x_1>x_2>\dots>x_n$ be a perfect elimination order of $G^c$. Then, for all $\{i,j\}\in E(G)$, with $i<j$,
		\begin{equation}\label{eq:setI(G)}
		\textup{set}(x_ix_j)=\{1,\dots,i-1\}\cup(\{i+1,\dots,j-1\}\cap N_{G}(i)).
		\end{equation}
		In particular,
		\begin{align*}
		\HS_k(I(G))=\big({\bf x}_A{\bf x}_B\ :&\ A,B\subseteq[n],A,B\ne\emptyset,\max(A)<\min(B),|A\cup B|=k+2,\\
		&\ \{\max(A),b\}\in E(G),\ \text{for all}\ b\in B\big).
		\end{align*}
	\end{Lemma}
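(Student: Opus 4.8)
The plan is to prove formula (\ref{eq:setI(G)}) by directly analysing the colon ideals that define the sets $\textup{set}(x_ix_j)$, and then to deduce the description of $\HS_k(I(G))$ by substituting (\ref{eq:setI(G)}) into the general identity (\ref{eq:HSILinQuot}) and regrouping index sets.

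Since $G$ is cochordal, $I(G)$ has linear quotients; as recalled after Fr\"oberg's theorem, an admissible order is obtained by listing the edge monomials $x_ax_b$ ($\{a,b\}\in E(G)$, $a<b$) in \emph{decreasing} lexicographic order induced by $x_1>\dots>x_n$, and $\textup{set}(\cdot)$ and (\ref{eq:HSILinQuot}) refer to this order. Fix $\{i,j\}\in E(G)$ with $i<j$, and let $J$ be the ideal generated by the edge monomials strictly lex-larger than $x_ix_j$. Then $\ell\in\textup{set}(x_ix_j)$ iff $x_\ell\in J:x_ix_j$, and since no minimal generator of $I(G)$ divides another, this forces $x_ax_b:x_ix_j=x_\ell$ for some edge $\{a,b\}$ with $x_ax_b>_{\textup{lex}}x_ix_j$; comparing supports, this means $\ell\notin\{i,j\}$ and $\{a,b\}\in\{\{\ell,i\},\{\ell,j\}\}$. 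I would then run through the possible positions of $\ell$ relative to $\{i,j\}$: writing the monomial of an edge $\{s,t\}$ as $x_{\min(s,t)}x_{\max(s,t)}$ and using $x_1>\dots>x_n$, for $\ell<i$ both $x_\ell x_i$ and $x_\ell x_j$ are lex-larger than $x_ix_j$; for $i<\ell<j$ only $x_ix_\ell$ can be lex-larger (the monomial $x_\ell x_j$ is lex-smaller); and for $\ell>j$ neither $x_ix_\ell$ nor $x_jx_\ell$ is lex-larger. Hence
$$
\textup{set}(x_ix_j)=\bigl\{\,\ell<i:\{\ell,i\}\in E(G)\ \text{or}\ \{\ell,j\}\in E(G)\,\bigr\}\cup\bigl(\{i+1,\dots,j-1\}\cap N_G(i)\bigr).
$$

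The only step that is not pure bookkeeping, and where Dirac's perfect-elimination property enters, is to see that the first set above equals all of $\{1,\dots,i-1\}$. Suppose some $\ell<i$ had $\{\ell,i\}\notin E(G)$ and $\{\ell,j\}\notin E(G)$; then $\{\ell,i\},\{\ell,j\}\in E(G^c)$, and since $\ell<i<j$ the vertices $i$ and $j$ both lie in the neighbourhood of $\ell$ in the induced subgraph $(G^c)_\ell$ on $\{\ell,\ell+1,\dots,n\}$. Because $x_1>\dots>x_n$ is a perfect elimination order of $G^c$, that neighbourhood induces a clique, so $\{i,j\}\in E(G^c)$, contradicting $\{i,j\}\in E(G)$. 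This establishes (\ref{eq:setI(G)}). (Reversibility of the elimination order plays no role here; it is needed only in Theorem \ref{Thm:HSI(G)LinQuot}.)

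It remains to plug (\ref{eq:setI(G)}) into (\ref{eq:HSILinQuot}). A generator there has the form $x_ix_j{\bf x}_A$ with $\{i,j\}\in E(G)$, $i<j$, $A\subseteq\textup{set}(x_ix_j)$ and $|A|=k$. Writing $A_1=A\cap\{1,\dots,i-1\}$ and $A_2=A\cap\{i+1,\dots,j-1\}$ (so $A_2\subseteq N_G(i)$ and $A=A_1\sqcup A_2$), this monomial equals ${\bf x}_{A_1\cup\{i\}}{\bf x}_{A_2\cup\{j\}}$, and the pair $(A_1\cup\{i\},A_2\cup\{j\})$ meets the conditions on the right-hand side of the claimed identity: $\max(A_1\cup\{i\})=i<\min(A_2\cup\{j\})$, the union of the two blocks has cardinality $k+2$, and $\{i,b\}\in E(G)$ for every $b\in A_2\cup\{j\}$. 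Conversely, a pair $(A,B)$ as on the right-hand side yields, with $i=\max(A)$ and $j=\max(B)$, the generator $x_ix_j{\bf x}_{A'}$ where $A'=(A\setminus\{i\})\cup(B\setminus\{j\})$: one has $i<j$, $\{i,j\}\in E(G)$, $|A'|=k$, ${\bf x}_A{\bf x}_B=x_ix_j{\bf x}_{A'}$, and $A'\subseteq\textup{set}(x_ix_j)$ precisely because $i=\max(A)$ puts $A\setminus\{i\}$ below $i$ while $j=\max(B)$ puts $B\setminus\{j\}$ inside $\{i+1,\dots,j-1\}\cap N_G(i)$. Thus the two generating sets coincide, which is the assertion. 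I expect no genuine obstacle beyond the single use of the perfect-elimination property above; the rest is comparing degree-two monomials in the lexicographic order and routine manipulation of finite index sets.
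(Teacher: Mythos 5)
Your proof is correct and follows essentially the same route as the paper: the same case analysis on the position of $\ell$ relative to $i$ and $j$ for computing $\textup{set}(x_ix_j)$, with the perfect elimination property of $G^c$ invoked at exactly the same point (to show that every $\ell<i$ belongs to $\textup{set}(x_ix_j)$). The translation into the stated description of $\HS_k(I(G))$, which the paper declares immediate, is carried out correctly in your write-up.
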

	\begin{proof}
		As remarked before $I(G)$ has linear quotients with respect to the lexicographic order $>_{\textup{lex}}$ induced by $x_1>x_2>\dots>x_n$. Let $\{i,j\}\in E(G)$ with $i<j$. Let us determine $\textup{set}(x_ix_j)$. If $k\in\textup{set}(x_ix_j)$, then $x_k(x_ix_j)/x_{\ell}\in I(G)$ and $x_k(x_ix_j)/x_{\ell}>_{\textup{lex}}x_ix_j$ for some $\ell\in\{i,j\}$. Note that $k<j$, indeed for $k>j$, both $x_ix_k,x_jx_k$ are smaller than $x_ix_j$ in the lexicographic order. Thus either $k<i$ or $i<k<j$. We distinguish the two possible cases.
		\smallskip\\
		\textsc{Case 1.} Suppose $k<i$. Assume that none of $x_{k}x_{i},x_{k}x_{j}$ is in $I(G)$. Then $\{k,i\},\{k,j\}\in E(G^c)$. Since $x_1>x_2>\dots>x_n$ is a perfect elimination order, the induced graph of $G_i^c$ on the vertex set $N_{G_k^c}(k)$ is complete. But $i,j>k$ and $i,j\in N_{G_k^c}(k)$. Thus we would have $\{i,j\}\in E(G^c)$, that is, $x_ix_j\notin I(G)$, absurd.
		\smallskip\\
		\textsc{Case 2.} Suppose $i<k<j$. Since $k>i$, $x_kx_j<_{\textup{lex}}x_ix_j$. Thus $k\in\textup{set}(x_ix_j)$ if and only if $x_ix_k\in E(G)$, that is $k\in N_{G}(i)$.
		\medskip\\
		The two cases above show that equation (\ref{eq:setI(G)}) holds. The formula for $\HS_k(I(G))$ follows immediately by applying equations (\ref{eq:HSILinQuot}) and (\ref{eq:setI(G)}).
	\end{proof}
    
    For the proof of the theorem we recall the concept of \textit{Betti splitting} \cite{FHT2009}.
    
    Let $I$, $I_1$, $I_2$ be monomial ideals of $S$ such that $G(I)$ is the disjoint union of $G(I_1)$ and $G(I_2)$. We say that $I=I_1+I_2$ is a \textit{Betti splitting} if
    $$
    \beta_{i,j}(I)=\beta_{i,j}(I_1)+\beta_{i,j}(I_2)+\beta_{i-1,j}(I_1\cap I_2) \ \ \ \textup{for all}\ i,j.
    $$

	\begin{proof}[Proof of Theorem \ref{Thm:HSI(G)LinQuot}.]
		We proceed by induction on $n\ge1$. Let $G'$ be the induced subgraph of $G$ on the vertex set $\{2,3,\dots,n\}$. Then $x_2>x_3>\dots>x_n$ is again a reversible perfect elimination order of $(G')^c$ and $G'$ is a reversible cochordal graph. 
		
		Let $J=(x_i:x_1x_i\in I(G))$. Then, $I(G)=x_1J+I(G')$ is a Betti splitting, because $G(I(G))$ is the disjoint union of $G(x_1J)$ and $G(I(G'))$, and $x_1J$, $I(G')$ have linear resolutions, see \cite[Corollary 2.4]{FHT2009}. Since $I(G')\cap x_1J=x_1I(G')$, \cite[Proposition 1.7]{CF} gives
		$$
		\HS_k(I(G))=x_1\big(\HS_{k-1}(I(G'))+\HS_k(J)\big)+\HS_k(I(G')).
		$$
		
		We claim that $\HS_k(I(G))$ has linear quotients with respect to the lexicographic order $>_{\textup{lex}}$ induced by $x_1>x_2>\dots>x_n$. For $k=0$ this is true. Let $k>0$.\smallskip
		
		Let $u=x_{i_1}x_{j_1}{\bf x}_{F_1},v=x_{i_2}x_{j_2}{\bf x}_{F_2}\in G(\HS_k(I(G)))$, with $u>_{\textup{lex}}v$, $i_1<j_1,i_2<j_2$, $x_{i_1}x_{j_1},\ x_{i_2}x_{j_2}\in I(G)$, $F_1\subseteq\textup{set}(u)$, $F_2\subseteq\textup{set}(v)$. We are going to prove that there exists $w\in G(\HS_k(I(G)))$ such that $w>_{\textup{lex}}v$, $w:v=x_p$ and $x_p$ divides $u:v$.\smallskip
		
		We can write
		$$
		u=x_{p_1}x_{p_2}\cdots x_{p_{k+2}},\ \ \ \ v=x_{q_1}x_{q_2}\cdots x_{q_{k+2}},
		$$
		with $p_1<p_2<\dots<p_{k+2}$, $q_1<q_2<\dots<q_{k+2}$. Since $u>_{\textup{lex}}v$ then $p_1=q_1$, $p_2=q_2$, $\dots$, $p_{s-1}=q_{s-1}$, $p_s<q_s$ for some $s\in\{1,\dots,k+2\}$. If $s=k+2$, then $u:v=x_{p_{k+2}}=x_{j_1}$ and there is nothing to prove. Therefore, we may assume $s<k+2$. Thus $p_s<q_s<q_{k+2}=j_2$. Set $p=p_{s}$ and $q=q_s$, then $x_p$ divides $u:v$.\medskip
		
		Suppose for the moment that $x_1$ divides $v$. Then by definition of $>_{\textup{lex}}$, $p_1=q_1=1$ and $x_1$ divides $u$, too. There are four cases to consider.\medskip\\
		\textsc{Case 1.} Suppose $i_1=i_2=1$. Setting $u'=u/x_1$ and $v'=v/x_1$, we have $u',v'\in G(\HS_k(J))$ and $u'>_{\textup{lex}}v'$. Since $J$ is an ideal generated by variables, it has homological linear quotients with respect to $>_{\textup{lex}}$. Hence, there exists $w'\in G(\HS_k(J))$ with $w'>_{\textup{lex}}v'$ such that $w':v'=x_\ell$ and $x_\ell$ divides $u':v'$. Setting $w=x_1w'$, we have that $w>_{\textup{lex}}v$ and $w\in G(x_1\HS_k(J))\subseteq G(\HS_k(I(G)))$. Hence $w:v=w':v'=x_\ell$ and $x_\ell$ divides $u:v=u':v'$.\medskip\\
		\textsc{Case 2.} Suppose $i_1>1$ and $i_2>1$. Setting $u'=u/x_1$ and $v'=v/x_1$, we have $u',v'\in G(\HS_{k-1}(I(G')))$ and $u'>_{\textup{lex}}v'$. By inductive hypothesis, $I(G')$ has homological linear quotients with respect to $>_{\textup{lex}}'$ induced by $x_2>x_3>\dots>x_n$. Hence, there exists $w'\in G(\HS_{k-1}(I(G')))$ with $w'>_{\textup{lex}}'v'$ such that $w':v'=x_\ell$ and $x_\ell$ divides $u':v'$. Setting $w=x_1w'$, we have that $w>_{\textup{lex}}v$ and $w\in G(x_1\HS_{k-1}(I(G')))\subseteq G(\HS_k(I(G)))$. Hence $w:v=w':v'=x_\ell$ and $x_\ell$ divides $u:v=u':v'$.\medskip\\
		\textsc{Case 3.} Suppose $i_1>1$ and $i_2=1$. Then $1=i_2<p<j_2$.\medskip\\
		\textsc{Subcase 3.1.} Assume $x_1x_p\in I(G)$, then $p\in\textup{set}(x_{i_2}x_{j_2})$. Setting $w=x_{p}(v/x_q)$, by equation (\ref{eq:HSILinQuot}) $w\in G(\HS_k(I(G)))$, and $w>_{\textup{lex}}v$, because $p<q$. Moreover $w:v=x_p$ and $x_p$ divides $u:v$.\medskip\\
		\textsc{Subcase 3.2.} Assume that $x_1x_p\notin I(G)$. By hypothesis, $x_n>x_{n-1}>\dots>x_1$ is also a perfect elimination order of $G^c$. Thus, by Lemma \ref{Lemma:setChordal}, we can write $u={\bf x}_A{\bf x}_B$ with $A=\{p_{k+2},p_{k+1},\dots,p_{r}\}$, $B=\{p_{r-1},\dots,p_{2},p_1\}$ for some $r>1$ and with $\{p_{r},p_\ell\}\in E(G)$ for all $\ell=r-1,\dots,2,1$. Since $\{1,p\}=\{p_1,p_s\}\notin E(G)$, by the above presentation of $u$, $s>r$. Using again Lemma \ref{Lemma:setChordal}, but considering the reversed perfect elimination order $x_n>x_{n-1}>\dots>x_1$, we see that
		\begin{align*}
		w&=x_{q_{s+1}}x_{q_{s+2}}\cdots x_{q_{k+2}}u/(x_{p_{s+1}}x_{p_{s+2}}\cdots x_{p_{k+2}})\\
		&={\bf x}_{(A\setminus\{p_{s+1},p_{s+2},\dots,p_{k+2}\})\cup\{q_{s+1},q_{s+2},\dots,q_{k+2}\}}{\bf x}_B\in G(\HS_k(I(G))).
		\end{align*}
		Moreover, $w>_{\textup{lex}}v$, $w:v=x_p$ and $x_p$ divides $u:v$, as desired.\medskip\\
		\textsc{Case 4.} Suppose $i_1=1$ and $i_2>1$. Recall that $p<j_2$. Moreover $p\ne i_2$, because $x_p$ divides $u:v$ but $x_{i_2}$ divides $v$. Thus there are two cases to consider.\medskip\\
		\textsc{Subcase 4.1.} Assume $p<i_2$. By Lemma \ref{Lemma:setChordal}, $p\in\textup{set}(x_{i_2}x_{j_2})$. If $q\ne i_2$, then $q<j_2$ and by equation (\ref{eq:HSILinQuot}) $w=x_{p}(v/x_q)$ is a minimal generator of $\HS_k(I(G))$. Moreover $w>_{\textup{lex}}v$ and $w:v=x_p$ divides $u:v$, as wanted. Suppose now that $q=i_2$. If there exists $\ell$ such that $x_\ell$ divides $v$ and $i_2<\ell<j_2$, then $\ell>p$ and $w=x_{p}(v/x_\ell)$ is a minimal generator of $\HS_k(I(G))$ such that $w>_{\textup{lex}}v$ and with $w:v=x_p$ dividing $u:v$, as wanted. Otherwise, suppose no such integer $\ell$ exists. Then, $s=k+1$, $q_{k+1}=i_2$ and $q_{k+2}=j_2$. Since $p\in\textup{set}(x_{i_2}x_{j_2})$, then $x_px_{\ell}\in I(G)$, where $\ell\in\{i_2,j_2\}$. Then $p<\ell$ and by Lemma \ref{Lemma:setChordal} we see that $w=x_{p}(v/x_\ell)$ is a minimal generator of $\HS_k(I(G))$ such that $w>_{\textup{lex}}v$ and with $w:v=x_p$ dividing $u:v$.\medskip\\
		\textsc{Subcase 4.2.} Assume now $i_2<p<j_2$. If $x_{i_2}x_p\in I(G)$, by Lemma \ref{Lemma:setChordal}, $p\in\textup{set}(x_{i_2}x_{j_2})$. Setting $w=x_p(v/x_q)$, we have $w\in G(\HS_k(I(G)))$, $w>_{\textup{lex}}v$ and $w:v=x_p$ divides $u:v$. Suppose now that $x_{i_2}x_p\notin I(G)$. By hypothesis, $x_n>x_{n-1}>\dots>x_1$ is also a perfect elimination order of $G^c$. Thus, by Lemma \ref{Lemma:setChordal}, we can write $u={\bf x}_A{\bf x}_B$ with $A=\{p_{k+2},p_{k+1},\dots,p_{r}\}$, $B=\{p_{r-1},\dots,p_{2},p_1\}$ for some $r>1$ and with $\{p_{r},p_\ell\}\in E(G)$ for all $\ell=r-1,\dots,2,1$. Note that $i_2<p$, so $x_{i_2}$ divides $u$. Since $\{i_2,p\}=\{i_2,p_s\}\notin E(G)$, by the above presentation of $u$, $s>r$. Using again Lemma \ref{Lemma:setChordal}, but considering the reversed perfect elimination order $x_n>x_{n-1}>\dots>x_1$, we see that
		\begin{align*}
		w&=x_{q_{s+1}}x_{q_{s+2}}\cdots x_{q_{k+2}}u/(x_{p_{s+1}}x_{p_{s+2}}\cdots x_{p_{k+2}})\\
		&={\bf x}_A{\bf x}_{(B\setminus\{p_{s+1},p_{s+2},\dots,p_{k+2}\})\cup\{q_{s+1},q_{s+2},\dots,q_{k+2}\}}\in G(\HS_k(I(G))).
		\end{align*}
		Moreover, $w>_{\textup{lex}}v$, $w:v=x_p$ and $x_p$ divides $u:v$, as desired.\medskip
		
		Suppose now that $x_1$ does not divide $v$. Then $v\in G(\HS_k(I(G')))$. If $x_1$ does not divide $u$, then $u\in G(\HS_k(I(G')))$, too. Let $>_{\textup{lex}}'$ be the lexicographic order induced by $x_2>x_3>\dots>x_n$. Since by induction $I(G')$ has homological linear quotients with respect to $>_{\textup{lex}}'$ and also $u>_{\textup{lex}}'v$, there exists $w\in G(\HS_k(I(G')))$, with $w>_{\textup{lex}}'v$, $w:v=x_\ell$ and $x_\ell$ divides $u:v$. But also we have $w\in G(\HS_k(I(G)))$ and $w>_{\textup{lex}}v$. Otherwise if $x_1$ divides $u$, then $x_1$ divides $u:v$. Since $\HS_{k}(I(G'))\subseteq\HS_{k-1}(I(G'))$ and $k>0$, we can write $v=x_tw'$ with $w'\in G(\HS_{k-1}(I(G')))$. Let $w=x_1w'$. Then $w>_{\textup{lex}}v$ and $w:v=x_1$ divides $u:v$.\medskip
		
		Hence, the inductive proof is complete and the theorem is proved.
	\end{proof}\medskip

	\begin{Remark}
		\rm Let $x_1>x_2>\dots>x_n$ be a reversible perfect elimination order of $G^c$. By symmetry, Theorem \ref{Thm:HSI(G)LinQuot} shows also that $\HS_k(I(G))$ has linear quotients with respect to the lexicographic order induced by $x_n>x_{n-1}>\dots>x_1$.
	\end{Remark}\medskip
	
	\begin{Examples}\label{Examples:reverseGraphs}
		\rm Let $n,m$ be two positive integers.
		\begin{enumerate}
			\item[(a)] Let $G=K_{n,m}$ be the \textit{complete bipartite graph}. That is, $V(G)=[n+m]$ and $E(G)=\big\{\{i,j\}:i\in[n],j\in\{n+1,\dots,n+m\}\big\}$. For example, for $n=3$ and $m=4$
			\begin{center}
				\begin{tikzpicture}[scale=0.8]
				\filldraw (0,0) circle (2pt) node[left]{1};
				\filldraw (0,1) circle (2pt) node[left]{2};
				\filldraw (0,2) circle (2pt) node[left]{3};
				\filldraw (3,-0.5) circle (2pt) node[right]{4};
				\filldraw (3,0.5) circle (2pt) node[right]{5};
				\filldraw (3,1.5) circle (2pt) node[right]{6};
				\filldraw (3,2.5) circle (2pt) node[right]{7};
				\draw[-] (0,0) -- (3,-0.5) -- (0,1) -- (3,0.5) -- (0,0) -- (3,1.5) -- (0,2) -- (3,2.5) -- (0,1);
				\draw[-] (0,0) -- (3,2.5) -- (0,1) -- (3,1.5);
				\draw[-] (3,-0.5) -- (0,2) -- (3,0.5);
				\end{tikzpicture}
			\end{center}
			It is easy to see that $G^c$ is the disjoint union of two complete graphs $\Gamma_1$ and $\Gamma_2$ on vertex sets $[n]$ and $\{n+1,\dots,n+m\}$ respectively. Furthermore, any ordering of the vertices is a perfect elimination order of $G^c$. Applying the previous theorem,
			$$
			I(G)=(x_1,\dots,x_n)(x_{n+1},\dots,x_m)
			$$
			has homological linear quotients with respect to the lexicographic order induced by any ordering of the variables.
			\item[(b)] Let $G$ be the graph with vertex set $V(G)=[n+m]$ and edge set
			$$
			E(G)=\big\{\{i,j\}:i\in[n+m],n+1\le j\le n+m,i<j\big\}.
			$$
			We claim that $G$ is a reversible cochordal graph. Indeed $G^c$ is the disjoint union of the complete graph $K_n$ on the vertex set $[n]$ together with the set of isolated vertices $\{n+1,\dots,n+m\}$. It easily seen that any ordering of the vertices is a perfect elimination order of $G^c$. Applying Theorem \ref{Thm:HSI(G)LinQuot}
			$$
			I(G)=(x_1,\dots,x_n)(x_{n+1},\dots,x_m)+(x_ix_j:n+1\le i<j\le n+m)
			$$
			has homological linear quotients with respect to the lexicographic order induced by any ordering of the variables.
		\end{enumerate}
	\end{Examples}

	\section{Homological shifts of Trees}
	
	In this section we construct several classes of edge ideals with homological linear quotients, by considering various operations on cochordal graphs that preserve the homological linear quotients property. As a main application of all these results we will prove the following theorem.
	\begin{Theorem}\label{Thm:HSForests}
		Let $G$ be a graph such that $G^c$ is a forest. Then $I(G)$ has homological linear quotients.
	\end{Theorem}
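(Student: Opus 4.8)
The plan is to derive the theorem by assembling two operations on chordal graphs that are already known to preserve homological linear quotients, namely disjoint union (Proposition~\ref{Prop:HSDisjoint}) and the addition of a whisker (Proposition~\ref{Prop:G+Whisker}), together with the elementary combinatorial observation that every forest is built from isolated vertices by repeatedly taking disjoint unions and attaching pendant vertices. Throughout, note that a forest has no cycles at all, so it is in particular chordal; hence for every forest $H$ the complement $H^c$ is cochordal and $I(H^c)$ has a linear resolution by Theorem~\ref{Thm:Froberg}, which is exactly the situation in which the two propositions are stated.

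\emph{Step 1: reduction to a tree.} Write the forest $G^c$ as the disjoint union $T_1\sqcup\cdots\sqcup T_t$ of its connected components, so each $T_i$ is a tree. I argue by induction on $t$. For $t=1$ we are in the tree case treated in Step~2. For $t>1$, write $G^c=(T_1\sqcup\cdots\sqcup T_{t-1})\sqcup T_t$; both summands are forests with fewer components, so by the induction hypothesis the edge ideals of their complements have homological linear quotients, and Proposition~\ref{Prop:HSDisjoint} then yields that $I(G)$ has homological linear quotients.

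\emph{Step 2: the tree case.} Let now $T:=G^c$ be a tree and induct on $|V(T)|$. If $|V(T)|\le 2$ then $T^c$ has no edges, so $I(G)=(0)$ and there is nothing to prove. If $|V(T)|\ge 3$, choose a leaf $v$ of $T$ and let $w$ be its unique neighbour. Then $T':=T\setminus\{v\}$ is again a tree, $w\in V(T')$, and $T$ is obtained from $T'$ by attaching the whisker $\{v,w\}$ at $w$. By the induction hypothesis $I((T')^c)$ has homological linear quotients, and since $T'$ is chordal Proposition~\ref{Prop:G+Whisker} applies and gives that $I(T^c)=I(G)$ has homological linear quotients. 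This completes the induction, and hence the proof.

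\emph{On the difficulty.} Granting the two propositions, the argument is a routine double induction, and the only points requiring care are bookkeeping: that the graphs produced along the way stay chordal (immediate, since induced subgraphs of a forest are forests and attaching a pendant vertex preserves being a forest, so Theorem~\ref{Thm:Froberg} and the propositions really do apply at each step), and that the degenerate base cases are covered. The genuine content of the section therefore sits entirely in Propositions~\ref{Prop:G+Whisker} and~\ref{Prop:HSDisjoint}; Theorem~\ref{Thm:HSForests} is merely their combination.
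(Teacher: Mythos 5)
Your argument is correct and is essentially identical to the paper's: the paper also reduces to the tree case by induction on the number of connected components using Proposition~\ref{Prop:HSDisjoint}, and the tree case is handled exactly by your leaf/whisker induction, which the paper records separately as Corollary~\ref{Cor:HSTree} deduced from Proposition~\ref{Prop:G+Whisker}. Your extra bookkeeping (chordality of the intermediate graphs, the degenerate base cases) is accurate but adds nothing beyond what the paper's version already implicitly uses.
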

	
	The \textit{squarefree Veronese ideal $I_{n,d}$ of degree} $d$ in $S=K[x_1,\dots,x_n]$ is the ideal of $S$ generated by all squarefree monomials of degree $d$ in $S$. It is well-known that $I_{n,d}$ has homological linear quotients, (see for instance \cite[Corollary 3.2]{HMRZ021a}).
	
	The first operation we consider consists in adding \textit{whiskers}. Let $\Gamma'$ be a graph on vertex set $[n-1]$. Let $i\in[n-1]$ and let $\Gamma$ be the graph with vertex set $[n]$ and edge set $V(\Gamma)=V(\Gamma')\cup\{\{i,n\}\}$. $\Gamma$ is called the \textit{whisker graph} of $\Gamma'$ obtained by adding the whisker $\{i,n\}$ to $\Gamma'$.
	
	\begin{Proposition}\label{Prop:G+Whisker}
		Let $\Gamma'$ be a graph on vertex set $[n-1]$ and $\Gamma$ be the graph on vertex set $[n]$ and edge set $V(\Gamma)=V(\Gamma')\cup\{\{i,n\}\}$ for some $i\in[n-1]$. Set $G=\Gamma^c$. Suppose $I((\Gamma')^c)$ has homological linear quotients. Then $I(G)$ has homological linear quotients, too.
	\end{Proposition}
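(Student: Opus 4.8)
The plan is to peel off the new vertex $n$ via a Betti splitting. Since $E(\Gamma)=E(\Gamma')\cup\{\{i,n\}\}$, in $G=\Gamma^c$ the induced subgraph on $[n-1]$ is $(\Gamma')^c$ and $n$ is adjacent to every vertex of $[n-1]$ except $i$; writing $I':=I((\Gamma')^c)$ and $L:=(x_j:j\in[n-1],\ j\ne i)$, this means $I(G)=I'+x_nL$. Every edge of $(\Gamma')^c$ has an endpoint $\ne i$, so $I'\subseteq L$, and hence $x_nL\cap I'=x_nI'$; as $x_nL$ is a shift of an ideal generated by variables and $I'$ has a linear resolution (it has homological linear quotients), $I(G)=I'+x_nL$ is a Betti splitting (\cite[Corollary 2.4]{FHT2009}). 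Then \cite[Proposition 1.7]{CF} — the tool already used in the proof of Theorem~\ref{Thm:HSI(G)LinQuot} — gives, for all $k\ge1$,
\[
\HS_k(I(G))\ =\ \HS_k(I')\ +\ x_n\big(\HS_k(L)+\HS_{k-1}(I')\big),
\]
while for $k=0$ one has $\HS_0(I(G))=I(G)$, which has linear quotients because $\Gamma$ is chordal (whiskering preserves chordality), so $G$ is cochordal.

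Next I would order the minimal generators of $\HS_k(I(G))$ by listing first an admissible order of $x_n(\HS_k(L)+\HS_{k-1}(I'))$ — these are exactly the generators divisible by $x_n$, and such an order comes from one of $\HS_k(L)+\HS_{k-1}(I')$ since $x_nu:x_nv=u:v$ — and then an admissible order of $\HS_k(I')$, which exists by hypothesis. To verify that this is admissible for $\HS_k(I(G))$, take a generator $w$ and an earlier generator $w'$. If $w=x_ng$ lies in the first block, then $w'=x_ng'$ does too, $w':w=g':g$, and we are done as soon as $\HS_k(L)+\HS_{k-1}(I')$ has linear quotients. If $w,w'\in G(\HS_k(I'))$, the homological linear quotients of $I'$ apply. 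The remaining case is $w\in G(\HS_k(I'))$, $w'=x_ng$: then $w':w$ is divisible by $x_n$, and $x_n$ itself lies in the colon ideal. Indeed, by Lemma~\ref{Lemma:setChordal} write $w={\bf x}_{A\cup B}$ with $A\cap B=\emptyset$, $\max A<\min B$ and $\{\max A,b\}\in E((\Gamma')^c)$ for all $b\in B$; then ${\bf x}_{(A\setminus\{\min A\})\cup B}$ (if $|A|\ge2$), respectively ${\bf x}_{A\cup(B\setminus\{\min B\})}$ (if $|A|=1$, which is possible since $|A\cup B|=k+2\ge3$), is a generator $g_0$ of $\HS_{k-1}(I')$ dividing $w$, so the first-block generator $x_ng_0$ precedes $w$ and satisfies $x_ng_0:w=x_n$. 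Thus the whole proposition reduces to the single claim that $\HS_k(L)+\HS_{k-1}(I')$ has linear quotients.

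Establishing that claim is the main obstacle. One helpful reduction is available: by Lemma~\ref{Lemma:setChordal} the generators of $\HS_{k-1}(I')$ are squarefree, so those not divisible by $x_i$ are squarefree monomials of degree $k+1$ in the variables indexed by $[n-1]\setminus\{i\}$ and therefore already lie in the squarefree Veronese ideal $\HS_k(L)$; consequently $\HS_k(L)+\HS_{k-1}(I')=\HS_k(L)+x_iM$ for a monomial ideal $M$ generated in degree $k$, which is the same ``squarefree Veronese plus $x_i\cdot(\text{ideal})$'' shape as $I(G)=I'+x_nL$. I would then attack it by a further induction, peeling off $x_i$ exactly as above and using the explicit generators from Lemma~\ref{Lemma:setChordal} to exhibit, for any two generators, a variable of their colon ideal that is realized by an earlier generator; the delicate points are checking that $\HS_k(L)$ and $x_iM$ again split off cleanly and that $M$ carries the inductive hypothesis it needs. (Alternatively, one could mimic directly the four-case analysis in the proof of Theorem~\ref{Thm:HSI(G)LinQuot}, replacing its appeals to reversibility — unavailable here — by the assumed homological linear quotients of $I((\Gamma')^c)$.)
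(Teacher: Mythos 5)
Your setup --- the Betti splitting $I(G)=I'+x_nL$, the resulting decomposition of $\HS_k(I(G))$, and the block ordering that lists the $x_n$-divisible generators first --- is exactly the paper's strategy, and your treatment of the mixed case (producing $g_0\in G(\HS_{k-1}(I'))$ dividing $w$ so that $x_ng_0:w=x_n$) is sound. But the argument is not complete: as you yourself say, everything reduces to the claim that $\HS_k(L)+\HS_{k-1}(I')$ has linear quotients, and you leave that claim unproved, offering only two unexecuted strategies (``peel off $x_i$ and induct again'' and ``mimic the four-case analysis''). That is a genuine gap, and it is the heart of the proposition.

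The paper closes precisely this gap with a short direct argument that you could have completed with the tools you already invoke. Take an admissible order $u_1,\dots,u_m$ of the squarefree Veronese ideal $\HS_k(L)$ and an admissible order $v_1,\dots,v_r$ of $\HS_{k-1}(I')$ (the latter exists by hypothesis); list first all the $u$'s and then, in the inherited order, those $v_{j_1},\dots,v_{j_p}$ not already lying in $G(\HS_k(L))$. Two kinds of colons must be checked against a fixed $v_{j_\ell}$. For $v_{j_q}:v_{j_\ell}$ with $q<\ell$, admissibility of $v_1,\dots,v_r$ yields $d<j_\ell$ with $v_d:v_{j_\ell}$ a variable dividing $v_{j_q}:v_{j_\ell}$, and $v_d$ is either one of the listed $v_{j_b}$ with $b<\ell$ or a generator of $\HS_k(L)$; in both cases it appears earlier in the list. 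For $u_q:v_{j_\ell}$, the key point is that $v_{j_\ell}\notin G(\HS_k(L))$ forces $x_i\mid v_{j_\ell}$; then for any variable $x_t$ dividing $u_q:v_{j_\ell}$ one has $t\ne i$ (since $u_q$ avoids $x_i$) and $x_t\nmid v_{j_\ell}$, so $x_tv_{j_\ell}/x_i$ is a squarefree monomial of degree $k+1$ in the variables indexed by $[n-1]\setminus\{i\}$, hence a generator of $\HS_k(L)$ whose colon with $v_{j_\ell}$ is exactly $x_t$. This is where the Veronese structure of $\HS_k(L)$ does the work; your proposed further induction is unnecessary, and its ``delicate points'' are exactly the content you would still owe.
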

	\begin{proof}
		Since $\Gamma'$ is chordal, obviously $\Gamma$ is chordal, too. Set $J=I((\Gamma')^c)$, $I=I(G)$ and $L=(x_j:j\in[n-1]\setminus\{i\})$. Since $N_{G^c}(n)=\{i\}$, we have the Betti splitting:
		\begin{equation}\label{eq:x_n-splitWhisker}
		I=x_nL+J.
		\end{equation}
		Since $G$ is cochordal, $\HS_0(I)$ and $\HS_1(I)$ have linear quotients. So we only have to show that $\HS_k(I)$ has linear quotients for $k\ge2$. By equation (\ref{eq:x_n-splitWhisker}), for all $k\ge2$,
		$$
		\HS_k(I)=x_n\HS_k(L)+x_n\HS_{k-1}(J)+\HS_k(J).
		$$
		Note that $\HS_k(L)$ is the squarefree Veronese ideal of degree $k+1$ in the polynomial ring $K[x_j:j\in[n-1]\setminus\{i\}]$. Thus $\HS_k(L)$ has linear quotients with admissible order, say, $u_1,\dots,u_m$. Let $v_1,\dots,v_r$ and $w_1,\dots,w_s$ be admissible orders of $\HS_{k-1}(J)$ and $\HS_k(J)$, respectively. Let $v_{j_1},\dots,v_{j_p}$, with $j_1<j_2<\dots<j_p$, the monomials in $G(\HS_{k-1}(J))\setminus G(\HS_{k}(L))$. We claim that
		\begin{equation}\label{eq:admOrderWhisker}
		x_{n}u_1,\dots,x_{n}u_m,\ x_{n}v_{j_1},\dots,x_{n}v_{j_p},\ w_1,\dots,w_s
		\end{equation}
		is an admissible order of $\HS_k(J)$.
		
		Let $\ell\in\{1,\dots,m\}$. Then $(x_{n}u_1,\dots,x_{n}u_{\ell-1}):x_{n}u_\ell=(u_1,\dots,u_{\ell-1}):u_{\ell}$ is generated by variables.
		
		Let $\ell\in\{1,\dots,p\}$. We show that
		\begin{align*}
		Q&=(x_{n}u_1,\dots,x_{n}u_m,x_{n}v_{j_1},\dots,x_{n}v_{j_{\ell-1}}):x_{n}v_{j_\ell}\\
		&=(u_1,\dots,u_m,v_{j_1},\dots,v_{j_{\ell-1}}):v_{j_\ell}
		\end{align*}
		is generated by variables. Consider $v_{j_q}:v_{j_\ell}$, then we can find $d<j_\ell$ such that $v_d:v_{j_\ell}$ is a variable that divides $v_{j_q}:v_{j_\ell}$. Either $d=j_b$, for some $b<\ell$, or $v_d\in\HS_{k}(L)$. In any case, $x_nv_d\in(u_1,\dots,u_m,v_{j_1},\dots,v_{j_{\ell-1}})$ and $v_{d}:v_{j_\ell}\in Q$ divides $v_{j_q}:v_{j_\ell}$.\\
		Consider now $u_q:v_{j_\ell}$, $1\le q\le m$. Hence $x_i$ divides $v_{j_\ell}$, lest $v_{j_\ell}\in G(\HS_{k}(L))$. But then $v_{j_\ell}/x_i\in\HS_{k-1}(L)$. Let $x_t$ dividing $u_q:v_{j_\ell}$. Then $u=x_tv_{j_\ell}/x_i\in\HS_{k}(L)$ and $u:v_{j_\ell}=x_t\in Q$ divides $u_q:v_{j_\ell}$.
		
		Finally, let $\ell\in\{1,\dots,s\}$. We show that
		\begin{align*}
		Q&=(x_{n}u_1,\dots,x_{n}u_m,x_{n}v_{j_1},\dots,x_{n}v_{j_p},w_1,\dots,w_{\ell-1}):w_\ell\\
		&=(x_{n}\HS_k(L)+x_{n}\HS_{k-1}(J)):w_\ell+(w_1,\dots,w_{\ell-1}):w_{\ell}
		\end{align*}
		is generated by variables. Since $w_1,\dots,w_{s}$ is an admissible order, $(w_1,\dots,w_{\ell-1}):w_{\ell}$ is generated by variables. Consider now a generator $x_{n}z:w_\ell$ with $z\in\HS_{k}(L)$ or $z\in\HS_{k-1}(J)$. Then $x_n$ divides $x_{n}z:w_\ell$. On the other hand $w_\ell/x_t\in\HS_{k-1}(J)$ for some $t$. But then $x_{n}w_\ell/x_t:w_\ell=x_n\in Q$ divides our generator.
		
		The three cases above show that (\ref{eq:admOrderWhisker}) is an admissible order, as desired. 
	\end{proof}
	
	Since any tree can be constructed iteratively by adding a whisker to a tree on a smaller vertex set at each step, the previous proposition implies immediately
	\begin{Corollary}\label{Cor:HSTree}
		Let $G$ be a graph such that $G^c$ is a tree. Then $I(G)$ has homological linear quotients.
	\end{Corollary}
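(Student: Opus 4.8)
The plan is to argue by induction on the number $n$ of vertices of $G^c$, with the inductive step supplied entirely by Proposition~\ref{Prop:G+Whisker}. The base case $n\le 2$ is trivial: then $G^c$ is a point or a single edge, so $G$ has no edges and $I(G)=(0)$, which has homological linear quotients vacuously.

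For the inductive step, let $T=G^c$ be a tree on $n\ge 3$ vertices and assume the statement for all graphs whose complement is a tree on fewer vertices. Every tree with at least two vertices has at least two leaves, so after a relabeling of the vertex set we may assume that $n$ is a leaf of $T$. This relabeling is harmless, because it corresponds to a permutation of the variables of $S$, a graded ring automorphism, which carries the minimal multigraded free resolution of $I(G)$ onto that of the relabeled edge ideal and hence preserves the property of having homological linear quotients. Let $i\in[n-1]$ be the unique neighbour of $n$ in $T$, and set $\Gamma'=T-n$, the induced subgraph of $T$ on $[n-1]$; as an induced subgraph of a tree it is again a tree. Then $T$ is precisely the whisker graph of $\Gamma'$ obtained by adding the whisker $\{i,n\}$, so in the notation of Proposition~\ref{Prop:G+Whisker} we have $\Gamma=T$ and $G=\Gamma^c$. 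By the inductive hypothesis $I((\Gamma')^c)$ has homological linear quotients, and Proposition~\ref{Prop:G+Whisker} then yields that $I(G)=I(T^c)$ has homological linear quotients, closing the induction.

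There is no real obstacle here, since all of the substance is already in Proposition~\ref{Prop:G+Whisker} (and, behind it, in Fr\"oberg's Theorem~\ref{Thm:Froberg} together with the known linear quotients of $\HS_0$ and $\HS_1$, which are available because a tree is chordal). The only point that deserves to be made explicit is the invariance of the homological linear quotients property under a relabeling of the variables, which is exactly what lets us always strip off the last vertex as a leaf and invoke Proposition~\ref{Prop:G+Whisker} in the form stated.
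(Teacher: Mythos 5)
Your proof is correct and is exactly the argument the paper intends: the paper compresses it into the remark that any tree is built by iteratively adding whiskers, and then invokes Proposition~\ref{Prop:G+Whisker}. Your version merely makes the induction, the leaf-stripping, and the (harmless) relabeling of variables explicit.
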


	The second operation we consider consists in joining disjoint graphs. Two graphs $\Gamma_1$ and $\Gamma_2$ are called \textit{disjoint} if $V(\Gamma_1)\cap V(\Gamma_2)=\emptyset$. The \textit{join} of $\Gamma_1$ and $\Gamma_2$ is the graph $\Gamma$ with vertex set $V(\Gamma)=V(\Gamma_1)\cup V(\Gamma_2)$ and edge set $E(\Gamma)=E(\Gamma_1)\cup E(\Gamma_2)$.
	
	\begin{Proposition}\label{Prop:HSDisjoint}
		Let $\Gamma_1$ and $\Gamma_2$ be disjoint chordal graphs such that $I(\Gamma_1^c),I(\Gamma_2^c)$ have homological linear quotients. Let $\Gamma$ be the join of $\Gamma_1$ and $\Gamma_2$ and set $G=\Gamma^c$. Then $I(G)$ has homological linear quotients, too.
	\end{Proposition}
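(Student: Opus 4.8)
The plan is to imitate the proof of Proposition~\ref{Prop:G+Whisker}: present $I(G)$ as a Betti splitting whose pieces are under control, pass to the homological shift ideals via \cite[Proposition~1.7]{CF}, and build a single admissible order by concatenation.

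Write $G_i=\Gamma_i^c$, $V_i=V(\Gamma_i)$ and $\mm_i=(x_v:v\in V_i)$. Since $\Gamma=\Gamma_1\sqcup\Gamma_2$ has no edge between $V_1$ and $V_2$, it is chordal, $G=\Gamma^c$ is cochordal, and describing the complement of a disjoint union gives $I(G)=I(G_1)+I(G_2)+\mm_1\mm_2$. I would argue by induction on $|E(G_1)|$. Put $I_1=I(G_1)$ and $I_2=I(G_2)+\mm_1\mm_2$; the graph with edge ideal $I_2$ has complement $K_{V_1}\sqcup\Gamma_2$, a disjoint union of chordal graphs, hence is cochordal, so $I_1$ and $I_2$ have linear resolutions and, by \cite[Corollary~2.4]{FHT2009}, $I(G)=I_1+I_2$ is a Betti splitting. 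A short monomial computation shows $I_1\cap I_2=I(G_1)\,\mm_2$ (the only surviving minimal generators are the $x_ax_bx_f$ with $\{a,b\}\in E(G_1)$ and $f\in V_2$), so \cite[Proposition~1.7]{CF} gives
\[
\HS_k(I(G))=\HS_k(I(G_1))+\HS_k\!\big(I(G_2)+\mm_1\mm_2\big)+\HS_{k-1}\!\big(I(G_1)\,\mm_2\big),\qquad k\ge 0 .
\]
In the base case $\Gamma_1=K_{V_1}$ one has $I(G_1)=0$; there I would use instead the splitting $I(G)=I(G_2)+\mm_1\mm_2$, whose summands are $I(G_2)$ and the complete bipartite edge ideal $\mm_1\mm_2=I(K_{|V_1|,|V_2|})$, with $I(G_2)\cap\mm_1\mm_2=\mm_1I(G_2)$.

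Next I would check each summand has linear quotients. The first does by hypothesis. The second, $I(G_2)+\mm_1\mm_2$, is the edge ideal produced by this same construction from $\Gamma_2$ and the complete graph $K_{V_1}$, whose complement is edgeless; since $|E(K_{V_1}^c)|=0<|E(G_1)|$ (and $I(K_{V_1}^c)=0$, $\mm_1\mm_2=I(K_{|V_1|,|V_2|})$ have homological linear quotients, the latter by Example~\ref{Examples:reverseGraphs}(a)), the inductive hypothesis applies and shows $\HS_k(I_2)$ has linear quotients for all $k$. For the third summand, $I(G_1)$ and $\mm_2$ involve disjoint sets of variables, so the minimal free resolution of the product is the tensor product of those of the factors; hence $\HS_{k-1}(I(G_1)\,\mm_2)=\sum_{p+q=k-1}\HS_p(I(G_1))\,\HS_q(\mm_2)$, where every $\HS_p(I(G_1))$ has linear quotients by hypothesis and every $\HS_q(\mm_2)$ is a squarefree Veronese ideal, and a product of ideals with homological linear quotients on disjoint variables again has homological linear quotients --- one orders a product $u_iw_j$ first by the position of $u_i$, then by that of $w_j$ --- so this summand also has linear quotients.

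Finally I would concatenate admissible orders of the three summands into a single admissible order for $\HS_k(I(G))$, taken with respect to the lexicographic order induced by a perfect elimination order of $\Gamma^c$ that lists $V_1$ before $V_2$, and verify --- exactly in the bookkeeping style of the proofs of Theorem~\ref{Thm:HSI(G)LinQuot} and Proposition~\ref{Prop:G+Whisker} --- that each colon ideal ``$(\text{earlier generators}):w$'' is generated by variables. For two generators lying in the same summand this is immediate; the content is entirely in the cross terms, where one uses that in $G$ every vertex of $V_1$ is adjacent to every vertex of $V_2$ to produce the needed ``witness'' variable, and that $I(G_1)\,\mm_2=I_1\cap I_2$ lies in both $I_1$ and $I_2$, so its generators serve as witnesses on either side. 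I expect this final assembly to be the main obstacle: arranging the three orders so that every colon computation mixing the summands is resolved by a variable generator that has already appeared --- the same delicate ordering argument that already makes Proposition~\ref{Prop:G+Whisker} non-trivial.
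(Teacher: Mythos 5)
Your reduction is sound as far as it goes: the splitting $I(G)=I(G_1)+\bigl(I(G_2)+\mm_1\mm_2\bigr)$ is indeed a Betti splitting of cochordal edge ideals, the intersection is $I(G_1)\mm_2$ as you compute, and the resulting three-term formula for $\HS_k(I(G))$ is correct. But the proof stops exactly where the proposition actually lives, and you say so yourself: the admissibility of the concatenated order is ``the main obstacle,'' and you do not resolve it. It is not routine. Concretely, take $v\in G(\HS_k(I(G_1)))$, supported in $V_1$, and $w\in G(\HS_k(I(G_2)))\subseteq G(\HS_k(I_2))$, supported in $V_2$. Then $v:w=v$, and the only candidate witnesses are monomials $x_a(w/x_f)$ with $a\in V_1$, $f\in V_2$; these lie in $\HS_k(\mm_1\mm_2)\subseteq\HS_k(I_2)$, i.e.\ in the \emph{same} block as $w$. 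Whether such a witness precedes $w$ depends entirely on which admissible order of $\HS_k(I_2)$ your induction hands you --- a generic admissible order need not list the mixed $V_1$--$V_2$ generators before the pure-$V_2$ ones. So you must either extract from the induction a specific order of $\HS_k(I_2)$ (mixed generators first) or change the decomposition; as written, the concatenation argument does not close.

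The paper takes a flatter route that sidesteps this. It keeps $L=\mm_1\mm_2$ as its own summand and shows directly, with no Betti splitting, that $\HS_k(I(G))=\HS_k(L)+\HS_k(I(G_1))+\HS_k(I(G_2))$; the key observation is that \emph{every} squarefree monomial of degree $k+2$ divisible by some $x_ix_j$ with $i\in V_1$, $j\in V_2$ is a minimal generator of $\HS_k(L)$, because $\textup{set}(x_{i_0}x_{j_0})=\{1,\dots,i_0-1\}\cup\{n+1,\dots,j_0-1\}$ for the complete bipartite ideal $L$. Listing the $\HS_k(L)$-block first, every cross colon $z:w_\ell$ is then witnessed by a monomial $x_i(w_\ell/x_t)$ that is automatically in $\HS_k(L)$, hence guaranteed to appear earlier. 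If you want to salvage your version, import exactly this observation: it both identifies the generators of $\HS_k(I(G))$ and places all cross-term witnesses in a leading block that you control.
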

	\begin{proof}
		Obviously $\Gamma$ is chordal, too. Let $G_1=\Gamma_1^c$, $G_2=\Gamma_2^c$, $V(G_1)=[n]$ and $V(G_2)=\{n+1,\dots,n+m\}$. Set $L=(x_1,\dots,x_n)(x_{n+1},\dots,x_{m})$. Then,
		$$
		I(G)=I(G_1)+I(G_2)+L
		$$
		Suppose $x_1>\dots>x_n$ and $x_{n+1}>\dots>x_{n+m}$ are perfect elimination orders of $\Gamma_1$ and $\Gamma_2$. Then $G=\Gamma^c$ is cochordal. Indeed, $x_1>\dots>x_n>x_{n+1}>\dots>x_{n+m}$ is a perfect elimination order of $\Gamma$. Let $>_\textup{lex}$ be the lexicographic order induced by such an ordering of the variables. Set, $I=I(G)$, $I_1=I(G_1)$ and $I_2=I(G_2)$. Then, $I,I_1,I_2$ and $J$ have linear quotients with respect to $>_{\textup{lex}}$.
		
		Let $k\ge0$ and $u\in G(\HS_k(I))$ such that $x_ix_j$ divides $u$ for some integers $i\in[n]$,\ \ $n+1\!\le\! j\le\! n+m$. We claim that $u\in G(\HS_k(L))$. Let $i_0=\max\{i\in[n]:x_i\ \textup{divides}\ u\}$ and $j_0=\max\{j\in\{n+1,\dots,n+m\}:x_j\ \textup{divides}\ u\}$. Let $u/(x_{i_0}x_{j_0})={\bf x}_F$. Then $F\subseteq\{1,\dots,i_0-1\}\cup\{n+1,\dots,j_0-1\}=\textup{set}_I(x_{i_0}x_{j_0})$ and $x_{i_0}x_{j_0}\in L$. Thus, by equation (\ref{eq:HSILinQuot}), $u=x_{i_0}x_{j_0}{\bf x}_F\in\HS_{k}(L)$, as desired. This argument shows that any squarefree monomial $w\in K[x_1,\dots,x_{n+m}]$ of degree $k+2$, containing as a factor any monomial $x_ix_j$ with $i\in[n]$ and $n+1\le j\le n+m$, is a generator of $\HS_{k}(L)$.
		
		From this remark, for all $k\ge0$, it follows that
		$$
		\HS_{k}(I)=\HS_{k}(L)+\HS_{k}(I_1)+\HS_{k}(I_2).
		$$
		Note that $L$ is the edge ideal of a complete bipartite graph. By Examples \ref{Examples:reverseGraphs}(a), $L$ has homological linear quotients. Let $u_1,\dots,u_m$ be an admissible order of $\HS_k(L)$. Moreover, let $v_1,\dots,v_r$ and $w_1,\dots,w_s$ be admissible orders of $\HS_k(I_1)$ and $\HS_{k}(I_2)$, respectively. Note that the monomials $u_i,v_j,w_t$ are all different, because all monomials $u_i$ contain a factor $x_{i_0}x_{j_0}$ with $i_0\in[n]$ and $j_0\in\{n+1,\dots,n+m\}$. Whereas, the $v_j$ are monomials in $K[x_1,\dots,x_n]$ and the $w_t$ are monomials in $K[x_{n+1},\dots,x_{n+m}]$.\smallskip
		
		We claim that
		\begin{equation}\label{eq:admOrderDisjUnion}
		u_1,\dots,u_m,\ v_{1},\dots,v_{r},\ w_1,\dots,w_s
		\end{equation}
		is an admissible order of $\HS_k(I)$.
		
		Let $\ell\in\{1,\dots,m\}$. Then $(u_1,\dots,u_{\ell-1}):u_{\ell}$ is generated by variables.
		
		Let $\ell\in\{1,\dots,r\}$. We show that
		$$
		Q=(u_1,\dots,u_m,v_{1},\dots,v_{\ell-1}):v_{\ell}
		$$
		is generated by variables. Clearly $(v_1,\dots,v_{\ell-1}):v_{\ell}$ is generated by variables. Consider now $u_q:v_{\ell}$, $1\le q\le m$. Recall that $v_\ell$ is a monomial in $K[x_1,\dots,x_n]$. Thus $x_j$ divides $u_q:v_{\ell}$ for some $j\in\{n+1,\dots,n+m\}$. Consider $v_\ell/x_t$ for some $t$. Then $u=x_j(v_\ell/x_t)\in\HS_k(L)$ and $u:v_\ell=x_j\in Q$, as desired.
		
		Finally, let $\ell\in\{1,\dots,s\}$. We show that
		$$
		Q=(u_1,\dots,u_m,v_{1},\dots,v_{r},w_1,\dots,w_{\ell-1}):w_\ell
		$$
		is generated by variables. Since $w_1,\dots,w_{s}$ is an admissible order, $(w_1,\dots,w_{\ell-1}):w_{\ell}$ is generated by variables. Consider now a generator $z:w_\ell$ with $z=u_q$ or $z=v_q$, for some $q$. Since $w_\ell$ is a monomial in $K[x_{n+1},\dots,x_{n+m}]$, $z:w_\ell$ is divided by a variable $x_i$, where $i\in[n]$. Consider $w_\ell/x_t$ for some $t$. Then $u=x_i(w_\ell/x_t)\in\HS_k(L)$ and $u:w_\ell=x_i\in Q$, as desired.
		
		The three cases above show that (\ref{eq:admOrderDisjUnion}) is an admissible order, as desired. 
	\end{proof}
	
	\begin{proof}[Proof of Theorem \ref{Thm:HSForests}]
		Let $\Gamma=G^c$ be a forest and let $c$ be the number of connected components of $\Gamma$. If $c=1$, then $\Gamma$ is a tree, and by Corollary \ref{Cor:HSTree}, $I(G)$ has homological linear quotients. Suppose $c>1$ and write $\Gamma=\Gamma_1\cup\Gamma_2$, where $\Gamma_1$ and $\Gamma_2$ are disjoint forests. The the numbers of connected components of $\Gamma_1$ and $\Gamma_2$ are smaller than $c$. Thus, by induction $I(\Gamma_1^c)$ and $I(\Gamma_2^c)$ have homological linear quotients. Applying Proposition \ref{Prop:HSDisjoint}, it follows that $I(G)$ has homological linear quotients, too.
	\end{proof}

	Let $G$ be a complete multipartite graph, then $G^c$ is the disjoint union of some complete graphs. Repeated applications of Proposition \ref{Prop:HSDisjoint} yield
	\begin{Corollary}
		Let $G$ be a complete multipartite graph. Then $I(G)$ has homological linear quotients.
	\end{Corollary}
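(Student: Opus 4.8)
The plan is to reduce the statement to Proposition~\ref{Prop:HSDisjoint} via the structure of complete multipartite graphs under complementation. First I would record the elementary observation: if $G$ is complete multipartite with parts (the maximal independent sets) $V_1,\dots,V_t$, then two distinct vertices of $G$ are adjacent precisely when they lie in different parts, so the pairs that are \emph{non}-edges of $G$ are exactly the pairs inside a common part. Hence $G^c = K_{V_1}\sqcup K_{V_2}\sqcup\dots\sqcup K_{V_t}$ is the disjoint union of the complete graphs on the $V_i$; in particular $G^c$ is chordal, and this is genuinely a disjoint-union situation to which Proposition~\ref{Prop:HSDisjoint} applies.

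Next I would induct on the number of parts $t$. When $t=1$ the graph $G$ is edgeless, $I(G)=(0)$, and there is nothing to prove (the zero ideal vacuously has homological linear quotients, all of its homological shift ideals being zero). For the inductive step $t\ge 2$, write $G^c$ as the join of the two disjoint chordal graphs $\Gamma_1=K_{V_1}$ and $\Gamma_2=K_{V_2}\sqcup\dots\sqcup K_{V_t}$. Then $\Gamma_1^c$ is the edgeless graph on $V_1$, so $I(\Gamma_1^c)=(0)$ has homological linear quotients trivially, while $\Gamma_2^c$ is again complete multipartite, now with $t-1$ parts, so $I(\Gamma_2^c)$ has homological linear quotients by the inductive hypothesis. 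Proposition~\ref{Prop:HSDisjoint}, applied to $\Gamma_1$ and $\Gamma_2$, then yields that $I(G)=I((\Gamma_1\cup\Gamma_2)^c)$ has homological linear quotients, completing the induction. (Alternatively one can take as base case $t=2$, where $G$ is the complete bipartite graph handled directly in Example~\ref{Examples:reverseGraphs}(a), and peel off one clique of $G^c$ at a time for $t\ge 3$.)

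I do not expect any real obstacle: the substantive content is entirely contained in Proposition~\ref{Prop:HSDisjoint}. The only point that needs a moment's care is the bookkeeping around the degenerate summands $K_{V_i}^c$, which are edgeless: one must be willing to treat the zero ideal as having homological linear quotients, and to observe that the construction in the proof of Proposition~\ref{Prop:HSDisjoint} remains valid when one of the two input edge ideals is the zero ideal — the admissible order built there simply omits the (empty) block of generators coming from that ideal.
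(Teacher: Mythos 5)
Your proof is correct and takes essentially the same route as the paper, which likewise observes that $G^c$ is a disjoint union of complete graphs and concludes by repeated applications of Proposition~\ref{Prop:HSDisjoint}. Your explicit handling of the degenerate summands (the edgeless complements $K_{V_i}^c$ with zero edge ideal) is a point the paper leaves implicit, but it does not change the substance of the argument.
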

	
	\section{Polymatroidal ideals generated in degree two}
	
	A \textit{polymatroidal ideal} $I\subset S=K[x_1,\dots,x_n]$ is a monomial ideal $I$ generated in a single degree verifying the following \textit{exchange property}: for all $u,v\in G(I)$ with $u\ne v$ and all $i$ such that $\deg_{x_i}(u)>\deg_{x_i}(v)$, there exists $j$ such that $\deg_{x_j}(u)<\deg_{x_j}(v)$ and $x_j(u/x_i)\in G(I)$.
	
	The name polymatroidal ideal is justified by the fact that their minimal generating set corresponds to the set of bases of a \textit{discrete polymatroid}. A squarefree polymatroidal ideal is called \textit{matroidal}. Any polymatroidal ideal also satisfy a dual version of the exchange property.
	\begin{Lemma}\label{Lemma:SymExchProp}
		\textup{(\cite[Lemma 2.1]{HH2003}).} Let $I\subset S$ be a polymatroidal ideal. Then, for all $u,v\in G(I)$ and all $i$ such that $\deg_{x_i}(u)>\deg_{x_i}(v)$, there exists $j$ such that $\deg_{x_j}(u)<\deg_{x_j}(v)$ and $x_i(v/x_j)\in G(I)$.
	\end{Lemma}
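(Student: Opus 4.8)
The plan is to derive the statement directly from the exchange property defining a polymatroidal ideal, by induction on the integer $\delta(u,v)=\sum_{k=1}^{n}\max(\deg_{x_k}(u)-\deg_{x_k}(v),\,0)$. Because $u$ and $v$ have the same degree, $\delta(u,v)$ also equals $\sum_{k}\max(\deg_{x_k}(v)-\deg_{x_k}(u),\,0)$, and the hypothesis $\deg_{x_i}(u)>\deg_{x_i}(v)$ forces $\delta(u,v)\ge 1$. In the base case $\delta(u,v)=1$ one has $\deg_{x_i}(u)=\deg_{x_i}(v)+1$, $\deg_{x_k}(u)\le\deg_{x_k}(v)$ for all $k\ne i$, and exactly one index $j$ with $\deg_{x_j}(v)>\deg_{x_j}(u)$; for that $j$, the monomials $u$ and $v$ agree in every coordinate except $i$ and $j$, so $x_i(v/x_j)=u\in G(I)$ and $j$ is the desired index.

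For the inductive step, suppose $\delta(u,v)\ge 2$. The key move is to choose an index $\ell$ with $\deg_{x_\ell}(u)>\deg_{x_\ell}(v)$ that survives one application of the exchange property at $x_i$: if $\deg_{x_i}(u)\ge\deg_{x_i}(v)+2$ I take $\ell=i$, and otherwise $\deg_{x_i}(u)=\deg_{x_i}(v)+1$, in which case $\delta(u,v)\ge 2$ guarantees that some $\ell\ne i$ satisfies $\deg_{x_\ell}(u)>\deg_{x_\ell}(v)$. Applying the exchange property to $u,v$ and $\ell$ yields an index $m$ with $\deg_{x_m}(u)<\deg_{x_m}(v)$ and $u'=x_m(u/x_\ell)\in G(I)$. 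Then $m\notin\{i,\ell\}$, the inequality $\deg_{x_i}(u')>\deg_{x_i}(v)$ still holds (immediate when $\ell\ne i$, and exactly why the gap $\ge 2$ was required when $\ell=i$), and $\delta(u',v)=\delta(u,v)-1$, since passing from $u$ to $u'$ lowers one ``excess'' coordinate by one and raises one ``deficit'' coordinate by one. By the induction hypothesis applied to $u',v$ and the index $i$, there is an index $j$ with $\deg_{x_j}(v)>\deg_{x_j}(u')$ and $x_i(v/x_j)\in G(I)$.

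The remaining point, which is where care is needed, is to check that this same $j$ satisfies $\deg_{x_j}(v)>\deg_{x_j}(u)$. Since $u'$ differs from $u$ only in that $\deg_{x_\ell}(u')=\deg_{x_\ell}(u)-1$ and $\deg_{x_m}(u')=\deg_{x_m}(u)+1$, one argues by cases: $j=i$ is impossible because it contradicts $\deg_{x_i}(u')>\deg_{x_i}(v)$; $j=\ell$ is impossible because it would force $\deg_{x_\ell}(v)\ge\deg_{x_\ell}(u)$, against the choice of $\ell$; if $j=m$ then $\deg_{x_j}(v)>\deg_{x_j}(u')=\deg_{x_j}(u)+1>\deg_{x_j}(u)$; and if $j\notin\{i,\ell,m\}$ then $\deg_{x_j}(u')=\deg_{x_j}(u)$. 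In every admissible case $\deg_{x_j}(v)>\deg_{x_j}(u)$, so the induction closes. I expect this last bookkeeping, together with the need to move $u$ toward $v$ along a coordinate that does not spoil the strict inequality at $x_i$ (which forces the split on whether $\deg_{x_i}(u)\ge\deg_{x_i}(v)+2$), to be the only genuinely delicate part; everything else is routine. As an alternative one could invoke that $G(I)$ is the set of bases of a discrete polymatroid and deduce the claim from submodularity of its rank function, using that intersections of tight sets are tight, but the inductive argument above relies only on the stated exchange property.
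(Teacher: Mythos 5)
Your argument is correct, and I checked the delicate points: the base case $\delta(u,v)=1$ does force $u=x_i(v/x_j)$; the index $m$ produced by the exchange property automatically avoids $i$ and $\ell$ because its $u$-degree is strictly below its $v$-degree; the inequality $\deg_{x_i}(u')>\deg_{x_i}(v)$ survives precisely because of your split on whether $\deg_{x_i}(u)\ge\deg_{x_i}(v)+2$; the drop $\delta(u',v)=\delta(u,v)-1$ holds since the move lowers a strict excess coordinate and raises a strict deficit coordinate; and the final case analysis showing that the $j$ returned by the induction hypothesis also satisfies $\deg_{x_j}(u)<\deg_{x_j}(v)$ is exhaustive and sound. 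The paper itself offers no proof: the lemma is quoted verbatim from Herzog and Hibi, where the standard derivation passes through the language of discrete polymatroids and obtains the symmetric exchange theorem from the ground set rank function and its submodularity (the route you mention as an alternative). Your induction on $\delta(u,v)$ buys a completely self-contained, elementary proof that never leaves the monomial-ideal formulation and uses only the defining exchange property, at the cost of the coordinate bookkeeping you flagged; the rank-function proof is shorter once the polymatroid machinery is set up and yields the stronger two-sided symmetric exchange statement, but requires importing that machinery. One cosmetic remark: what you prove is the ``dual'' exchange property (only $v$ is modified), which is exactly what the lemma asserts, so no more is needed.
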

	
	There are many useful characterization of polymatroidal ideals. The following one is due Bandari and Rahmati-Asghar.
	\begin{Theorem}\label{Thm:BanRam}
		\textup{(\cite[Theorem 2.4]{BanRam019}).} Let $I\subset S$ be a monomial ideal generated in a single degree. Then, $I$ is polymatroidal if and only if $I$ has linear quotients with respect to the lexicographic order induced by any ordering of the variables.
	\end{Theorem}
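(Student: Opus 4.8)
The plan is to prove the two implications separately; the forward one is a short consequence of the dual exchange property, while the real work sits in the converse.

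\emph{If $I$ is polymatroidal,} fix any ordering of the variables, relabel so that it reads $x_1>x_2>\dots>x_n$, and list $G(I)$ as $u_1>_{\textup{lex}}u_2>_{\textup{lex}}\dots>_{\textup{lex}}u_m$; I claim this is an admissible order. Fix $i$ and $j<i$, and let $p$ be the least index with $\deg_{x_p}(u_j)\ne\deg_{x_p}(u_i)$. Since $u_j>_{\textup{lex}}u_i$ we have $\deg_{x_p}(u_j)>\deg_{x_p}(u_i)$, so $x_p$ divides $u_j:u_i$. Applying the dual exchange property (Lemma \ref{Lemma:SymExchProp}) to $u_j,u_i$ and the coordinate $p$ gives an index $q$ with $\deg_{x_q}(u_j)<\deg_{x_q}(u_i)$ and $x_p(u_i/x_q)\in G(I)$. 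Since $p$ is the first coordinate on which $u_j$ and $u_i$ differ, the inequality $\deg_{x_q}(u_j)<\deg_{x_q}(u_i)$ forces $q>p$; hence $x_p(u_i/x_q)$ differs from $u_i$ only at the coordinates $p$ (larger) and $q$ (smaller), so $x_p(u_i/x_q)>_{\textup{lex}}u_i$. Writing $u_\ell:=x_p(u_i/x_q)$ we thus get $\ell<i$ and $u_\ell:u_i=x_p$, and $x_p$ divides $u_j:u_i$. As $j<i$ was arbitrary, $(u_1,\dots,u_{i-1}):u_i$ is generated by variables, so $I$ has linear quotients with respect to $>_{\textup{lex}}$, for the arbitrary ordering fixed at the start.

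\emph{Conversely,} assume $I$ has linear quotients with respect to the lexicographic order of every ordering of the variables, and verify the exchange property. Fix $u,v\in G(I)$ with $u\ne v$ and a coordinate $i$ with $\deg_{x_i}(u)>\deg_{x_i}(v)$; I must find $j$ with $\deg_{x_j}(u)<\deg_{x_j}(v)$ and $x_j(u/x_i)\in G(I)$. Choose the ordering of the variables so that the coordinates $t$ with $\deg_{x_t}(v)>\deg_{x_t}(u)$ are listed first. Such coordinates exist (since $\deg u=\deg v$ and $\deg_{x_i}(u)>\deg_{x_i}(v)$), and the first one in the chosen order witnesses $v>_{\textup{lex}}u$. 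Write $u=u_k$ in the induced admissible order $u_1>_{\textup{lex}}\dots>_{\textup{lex}}u_m$. Then $(u_1,\dots,u_{k-1}):u_k$ is generated by variables and contains $v:u$, so some variable $x_p$ dividing $v:u$ lies in it; in particular $\deg_{x_p}(u)<\deg_{x_p}(v)$, and from $x_pu\in(u_1,\dots,u_{k-1})$ one obtains a generator $w=x_p(u/x_r)\in G(I)$ with $w>_{\textup{lex}}u$, for some $r\ne p$ with $\deg_{x_r}(u)\ge1$. If $r=i$ we are done, taking $j=p$. If $r\ne i$ then $w$ still satisfies $\deg_{x_i}(w)>\deg_{x_i}(v)$, and the argument is to be repeated with $(w,v)$ in place of $(u,v)$, after re-choosing the variable ordering.

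The step I expect to be the main obstacle is exactly this last loop of the converse: the linear quotients hypothesis yields only \emph{some} one-variable exchange, with no control over the coordinate $x_r$ that is removed, and the naive replacement of $(u,v)$ by $(w,v)$ need not decrease $\deg u-\deg\gcd(u,v)$. Making the reduction terminate requires more careful bookkeeping — I would argue from a minimal bad triple $(u,v,i)$, use a termination measure such as the lexicographically ordered pair $\bigl(\deg_{x_i}(u)-\deg_{x_i}(v),\ \deg u-\deg\gcd(u,v)\bigr)$, and exploit the freedom to re-permute the variables at each step so that the removed coordinate is eventually forced to be $x_i$, contradicting minimality of the bad triple. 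Once the exchange property holds, $G(I)$ is the base set of a discrete polymatroid and $I$ is polymatroidal; so essentially all the difficulty is concentrated in the termination of this reduction, whereas the forward direction is immediate from Lemma \ref{Lemma:SymExchProp}.
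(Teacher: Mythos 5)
The paper offers no proof of this statement --- it is imported verbatim from \cite[Theorem 2.4]{BanRam019} --- so there is no in-paper argument to compare yours against; I can only judge the proposal on its own terms. Your forward implication is correct and complete: taking $p$ to be the first coordinate where $u_j$ and $u_i$ differ, invoking the dual exchange property of Lemma \ref{Lemma:SymExchProp} to get $q$ with $x_p(u_i/x_q)\in G(I)$, and observing $q>p$ so that this generator is lexicographically larger than $u_i$ and has colon exactly $x_p$, is the standard argument, and it is the same mechanism the paper itself uses in the proof of Lemma \ref{Lemma:MatrOrders}.

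The converse, however, is a plan rather than a proof, and the gap you flag yourself is genuine and is not closed by the devices you sketch. The linear-quotients hypothesis yields only \emph{some} $w=x_p(u/x_r)\in G(I)$ with $\deg_{x_p}(u)<\deg_{x_p}(v)$; it gives no control over the removed coordinate $r$, whereas the exchange property requires $r=i$. Your termination measure does not actually decrease: for $w=x_p(u/x_r)$ with $p,r\ne i$ the first component $\deg_{x_i}(u)-\deg_{x_i}(v)$ is unchanged, and since $\gcd(w,v)$ gains one at $p$ but loses one at $r$ precisely when $\deg_{x_r}(v)\ge\deg_{x_r}(u)$, the second component $\deg u-\deg\gcd(u,v)$ can stall indefinitely. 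Re-permuting the variables between iterations does not obviously help either, because the ordering only influences which variables can serve as $x_p$ (those dividing $v:u$), never which variable gets removed, so the ``minimal bad triple'' is not set up in a way that forces a contradiction, and there is no stopping condition that produces $r=i$. As written, the converse must be regarded as unproven; to complete it one needs a genuinely different argument, such as the one given in \cite{BanRam019}.
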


	It is expected by Bandari, Bayati and Herzog that the homological shift ideals $\HS_k(I)$ of a polymatroidal ideal $I$ are all polymatroidal, see \cite{Bay019,HMRZ021a}. In this section, we provide an affirmative answer to this conjecture for all polymatroidal ideals generated in degree two.
	
	Firstly, we deal with the squarefree case.
	\begin{Lemma}\label{Lemma:MatrOrders}
		Let $I\subset S$ be a matroidal ideal generated in degree two, and let $G$ be the simple graph on $[n]$ such that $I=I(G)$. Then, any ordering of the variables is a perfect elimination order of $G^c$.
	\end{Lemma}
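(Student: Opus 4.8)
First I would reduce the lemma to a structural statement about $G$. An ordering $v_1,\dots,v_n$ of the vertices of a graph $H$ is a perfect elimination order precisely when, for all $i<j<k$, the relations $\{v_i,v_j\}\in E(H)$ and $\{v_i,v_k\}\in E(H)$ force $\{v_j,v_k\}\in E(H)$. Quantifying this over all orderings, one sees that every ordering of the variables is a perfect elimination order of $G^c$ if and only if $G^c$ has no induced path on three vertices, i.e.\ $G^c$ is a disjoint union of complete graphs; equivalently, $G$ is a complete multipartite graph. So the goal becomes: show that for distinct vertices $a,b,c$, if $\{a,b\}\notin E(G)$ and $\{a,c\}\notin E(G)$ then $\{b,c\}\notin E(G)$. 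As is customary for polymatroidal ideals I may assume that every variable $x_i$ divides some element of $G(I)$, i.e.\ $G$ has no isolated vertex; otherwise one passes to the subring generated by the variables occurring in $I$, which keeps the ideal matroidal of degree two.

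The heart of the argument is then a single application of the exchange property. Suppose, for a contradiction, that $a,b,c$ are distinct with $\{a,b\},\{a,c\}\notin E(G)$ but $\{b,c\}\in E(G)$, so that $x_bx_c\in G(I)$. Since $G$ has no isolated vertex there is an edge $\{a,e\}\in E(G)$; because $\{a,b\},\{a,c\}\notin E(G)$ we have $e\notin\{b,c\}$, and of course $e\neq a$, so $x_ax_e$ and $x_bx_c$ are distinct elements of $G(I)$. Since $\deg_{x_e}(x_ax_e)=1>0=\deg_{x_e}(x_bx_c)$, the exchange property applied to $u=x_ax_e$, $v=x_bx_c$ and the variable $x_e$ yields an index $j$ with $\deg_{x_j}(x_ax_e)<\deg_{x_j}(x_bx_c)$ --- so necessarily $j\in\{b,c\}$ --- such that $x_j\cdot(x_ax_e/x_e)=x_ax_j\in G(I)$. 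Thus $\{a,j\}\in E(G)$ for some $j\in\{b,c\}$, contradicting $\{a,b\},\{a,c\}\notin E(G)$. This establishes the structural statement, and hence the lemma.

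I do not anticipate a real obstacle; the argument is genuinely this short. The two points worth isolating are: first, the equivalence among the three properties that every ordering of the variables is a perfect elimination order of $G^c$, that $G^c$ is a disjoint union of cliques, and that $G$ is complete multipartite; and second, the (classically known) fact that a single use of the exchange property pins down degree-two matroidal ideals as exactly the edge ideals of complete multipartite graphs. One could equally route the exchange step through the symmetric exchange property of Lemma~\ref{Lemma:SymExchProp}, but the defining exchange property already suffices. (If full support is not assumed, each isolated vertex of $G$ is a universal vertex of $G^c$, and the statement has to be read on the vertex set supporting $I$.)
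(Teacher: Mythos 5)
Your proof is correct and takes essentially the same route as the paper: a single proof-by-contradiction application of the exchange property, showing that two non-edges $\{a,b\},\{a,c\}$ of $G$ force $\{b,c\}$ to be a non-edge as well (the paper applies the symmetric exchange property of Lemma~\ref{Lemma:SymExchProp} to $x_ix_s$ and $x_jx_k$, whereas you apply the defining exchange property to $x_ax_e$ and $x_bx_c$; the two computations are mirror images of each other). Your explicit reduction to the full-support case is a worthwhile addition rather than a deviation, since the paper's step ``pick any monomial $x_ix_s\in I(G)$'' silently assumes the same thing, and the lemma as literally stated does fail when $G$ has an isolated vertex together with at least one edge.
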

	\begin{proof}
		Up to relabeling, we can consider the ordering $x_1>x_2>\dots>x_n$. Let $j,k\in N_{G^c}(i)$ with $j,k>i$. We must prove that $\{j,k\}\in E(G^c)$. By our assumption, $\{i,j\},\{i,k\}\notin E(G)$, that is $x_ix_j,x_ix_k\notin I(G)=I$. Suppose by contradiction that $\{j,k\}\notin E(G^c)$, then $\{j,k\}\in E(G)$, that is, $x_jx_k\in I(G)$. Pick any monomial $x_ix_s\in I(G)$. Then $\deg_{x_i}(x_ix_s)>\deg_{x_i}(x_jx_k)$. By Lemma \ref{Lemma:SymExchProp}, we can find $\ell$ with $\deg_{x_\ell}(x_ix_s)<\deg_{x_\ell}(x_jx_k)$ and $x_i(x_jx_k)/x_\ell\in I(G)$. Thus, either $x_ix_j\in I(G)$ or $x_ix_k\in I(G)$. This is a contradiction. Hence $\{j,k\}\in E(G^c)$, as desired.
	\end{proof}
	\begin{Corollary}\label{Cor:HSMatroidal}
		Let $I\subset S$ be a matroidal ideal generated in degree two. Then $\HS_{k}(I)$ is a matroidal ideal, for all $k\ge0$.
	\end{Corollary}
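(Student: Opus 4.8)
The plan is to reduce the statement to three results already available in the paper: the description of degree-two matroidal ideals in Lemma \ref{Lemma:MatrOrders}, the homological linear quotients theorem for reversible cochordal graphs (Theorem \ref{Thm:HSI(G)LinQuot}), and the Bandari--Rahmati-Asghar characterization of polymatroidal ideals (Theorem \ref{Thm:BanRam}). Write $I = I(G)$, where $G$ is the simple graph on $[n]$ with edge ideal $I$. By Lemma \ref{Lemma:MatrOrders}, \emph{every} ordering of the variables is a perfect elimination order of $G^c$; in particular $G^c$ is chordal, so $G$ is cochordal, and any perfect elimination order $x_1 > x_2 > \dots > x_n$ of $G^c$ is automatically \emph{reversible}, since its reverse $x_n > x_{n-1} > \dots > x_1$ is again one of these orderings and hence again a perfect elimination order.

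First I would apply Theorem \ref{Thm:HSI(G)LinQuot} to each of these (reversible) perfect elimination orders: for every ordering of the variables and every $k \geq 0$, the ideal $\HS_k(I)$ has linear quotients with respect to the induced lexicographic order. Next, using equation \eqref{eq:HSILinQuot} together with the explicit description in Lemma \ref{Lemma:setChordal}, I would note that each generator of $\HS_k(I)$ has the form $x_i x_j {\bf x}_A$ with $A \subseteq \textup{set}(x_i x_j) \subseteq [n] \setminus \{i,j\}$ and $|A| = k$; hence $\HS_k(I)$ is squarefree and generated in the single degree $k+2$.

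Finally, $\HS_k(I)$ is a monomial ideal generated in a single degree which has linear quotients with respect to the lexicographic order induced by \emph{any} ordering of the variables, so Theorem \ref{Thm:BanRam} shows it is polymatroidal; being squarefree, it is matroidal. As $k \geq 0$ is arbitrary, the corollary follows. There is no genuine obstacle here: the whole content is packaged in the cited results, and the only point needing a moment's care --- namely that a reversible perfect elimination order of $G^c$ exists --- is handed to us for free by Lemma \ref{Lemma:MatrOrders}.
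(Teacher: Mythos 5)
Your proposal is correct and follows essentially the same route as the paper: Lemma \ref{Lemma:MatrOrders} gives that every ordering of the variables is a (hence reversible) perfect elimination order of $G^c$, Theorem \ref{Thm:HSI(G)LinQuot} then yields linear quotients of $\HS_k(I)$ with respect to every lexicographic order, and Theorem \ref{Thm:BanRam} converts this into polymatroidality. Your explicit remark that $\HS_k(I)$ is squarefree and equigenerated of degree $k+2$ (needed to invoke Theorem \ref{Thm:BanRam} and to conclude matroidal rather than merely polymatroidal) is a point the paper leaves implicit, but it is not a different argument.
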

	\begin{proof}
		Let $G$ be the simple graph on $[n]$ such that $I=I(G)$. By Lemma \ref{Lemma:MatrOrders} and Theorem \ref{Thm:Froberg}, $G^c$ is a reversible chordal graph and any ordering of the variables is a reversible perfect elimination order of $G^c$. By Theorem \ref{Thm:HSI(G)LinQuot}, for all $k\ge0$, $\HS_{k}(I)$ has linear quotients with respect to the lexicographic order induced by any ordering of the variables. Thus, by Theorem \ref{Thm:BanRam}, $\HS_k(I)$ is matroidal, for all $k\ge0$.
	\end{proof}
	
	Now, we turn to the non-squarefree case.
	\begin{Theorem}\label{Thm:HSPolyDegree2}
		Let $I\subset S$ be a polymatroidal ideal generated in degree two. Then, $\HS_{k}(I)$ is a polymatroidal ideal, for all $k\ge0$.
	\end{Theorem}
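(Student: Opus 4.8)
The plan is to reduce to the matroidal case, i.e.\ to Corollary~\ref{Cor:HSMatroidal}, by polarization. Throughout we use Theorem~\ref{Thm:BanRam}: since $\HS_k(I)$ is generated in the single degree $k+2$, it suffices to prove that $\HS_k(I)$ has linear quotients with respect to the lexicographic order induced by \emph{every} ordering of the variables. If $I$ is squarefree it is matroidal and Corollary~\ref{Cor:HSMatroidal} applies, so from now on assume $I$ is not squarefree.

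First I would pin down the structure of degree-two polymatroidal ideals. Using the exchange property one checks that, after relabeling, $G(I)$ consists of the squares $x_i^2$ for $i$ in a set $M\subseteq[n]$, together with the squarefree monomials $x_jx_k$ for $j,k$ lying in distinct blocks of a partition $D_1,\dots,D_r$ of a subset of $[n]$ containing $M$, each $i\in M$ forming its own singleton block. Hence $I=J+(x_i^2:i\in M)$, where $J=I(\Gamma)$ is the edge ideal of the complete multipartite graph $\Gamma$ with parts $D_1,\dots,D_r$; in particular $J$ is matroidal. The polarization is $I^{\mathrm{pol}}=I(H)$, where $H$ is obtained from $\Gamma$ by attaching one pendant vertex $y_i$ at the vertex $i$ for each $i\in M$. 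As $I$ has a linear resolution and polarization preserves this, $I^{\mathrm{pol}}$ has a linear resolution, so $H$ is cochordal by Fr\"oberg's theorem.

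Two compatibility statements for polarization drive the argument. First, homological shift ideals commute with polarization, $\HS_k(I^{\mathrm{pol}})=\HS_k(I)^{\mathrm{pol}}$ for all $k$; this holds because every minimal multigraded shift of $I^{\mathrm{pol}}$ is a least common multiple of minimal generators and hence, $I^{\mathrm{pol}}$ being squarefree, uses only the lowest-indexed copy of each split variable, so that collapsing the multigraded shifts of $I^{\mathrm{pol}}$ reproduces exactly those of $I$. Second, for a fixed ordering of the variables of $S$, lifted to a block-respecting ordering of the polarized variables, having linear quotients with respect to the associated lexicographic orders is equivalent for a monomial ideal and for its polarization. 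Granting these, Theorem~\ref{Thm:HSPolyDegree2} reduces to the claim that \emph{for the graph $H$ above and every block-respecting ordering $\tau$ of $V(H)$, the ideal $\HS_k(I(H))$ has linear quotients with respect to $>_{\textup{lex}}$ induced by $\tau$, for all $k\ge0$}.

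This last claim is where the real work lies, and where the method is ``very special.'' The subtle point is that, unlike in Theorem~\ref{Thm:HSI(G)LinQuot}, a block-respecting $\tau$ is in general \emph{not} a reversible perfect elimination order of $H^c$---the pendant vertices $y_i$ are typically not simplicial in $H^c$ once $r\ge2$---so Theorem~\ref{Thm:HSI(G)LinQuot} does not apply off the shelf. I would first make $\HS_k(I(H))$ explicit through Lemma~\ref{Lemma:setChordal}, applied to an honest perfect elimination order of $H^c$ (the vertices inherited from $\Gamma$ are simplicial in $H^c$, so such an order exists), and then run the inductive exchange argument from the proof of Theorem~\ref{Thm:HSI(G)LinQuot} for the order $\tau$, now exploiting the concrete ``complete multipartite plus pendants'' shape of $H$: the constraint that $i$ and $y_i$ are consecutive in $\tau$ keeps the pendant variables under control. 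I expect the main obstacle to be precisely this exchange step for the non-PEO orderings $\tau$, which should require a careful case distinction according to whether the witnessing variable lands on a pendant, on a vertex in the same block, or in an earlier block. As consistency checks, the matroidal part $J$ is covered by Corollary~\ref{Cor:HSMatroidal}, and the pieces arising from the square generators are, up to a variable factor, squarefree Veronese ideals, hence polymatroidal; the content of the claim is that all these pieces fuse into a single polymatroidal ideal. Alternatively one could avoid polarization and induct on $|M|$, peeling off a square generator $x_{i_0}^2$ via a Betti splitting $I=x_{i_0}\mm_A+R$ with $R$ the deletion of $i_0$ (again degree-two polymatroidal, with smaller $M$) together with the homological-shift formula for Betti splittings; the obstacle is then the same, namely that a sum of polymatroidal ideals need not be polymatroidal.
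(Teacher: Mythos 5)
Your reduction is set up sensibly, but there is a genuine gap exactly where you yourself say ``the real work lies'': you never prove the claim that $\HS_k(I(H))$ has linear quotients with respect to the lexicographic order induced by \emph{every} block-respecting ordering of $V(H)$. You correctly note that such orderings are in general not (reversible) perfect elimination orders of $H^c$, so Theorem \ref{Thm:HSI(G)LinQuot} cannot be quoted, and what you offer in its place is only a description of what you ``would'' do and what you ``expect'' the case distinctions to look like. Since Theorem \ref{Thm:BanRam} requires linear quotients for all orderings, this unproved claim carries the entire content of the theorem in the non-squarefree case; the argument does not close without it. The two polarization compatibilities you invoke ($\HS_k(I^{\mathrm{pol}})=\HS_k(I)^{\mathrm{pol}}$, and the transfer of lex linear quotients along block-respecting orders) are plausible and can be justified via the description \eqref{eq:HSILinQuot} of the shifts as lcm's, but they are not established in the paper and would also need their own proofs; they are preprocessing, not the hard step.

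For comparison, the paper avoids polarization entirely. It writes $I=(J,x_1^2,\dots,x_t^2)$ with $J$ the matroidal squarefree part, shows $\textup{set}(x_\ell^2)=[n]\setminus\{\ell\}$, and obtains the decomposition $\HS_k(I)=\HS_k(J)+\sum_{\ell=1}^t x_\ell^2\,\HS_{k-1}(J_\ell)$ with $J_\ell=(x_i:i\neq\ell)$ --- the same ``matroidal piece plus Veronese-type pieces'' you identify. The decisive ingredient, which is what makes the separately polymatroidal pieces fuse into one polymatroidal ideal, is the observation that $x_\ell w\in\HS_k(I)$ for every squarefree monomial $w$ of degree $k+1$ and every $\ell\le t$; with this in hand the exchange property for $\HS_k(I)$ is checked directly by a case analysis on which piece $u$ and $v$ come from. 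If you want to complete your write-up, you need either this fusion lemma (or its polarized avatar) or the full lex-order verification you postponed; I would recommend dropping the polarization detour, since the difficulty reappears unchanged on the polarized side, and instead proving the exchange property directly from the displayed decomposition.
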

	\begin{proof}
		If $I$ is squarefree, the thesis follows from Corollary \ref{Cor:HSMatroidal}. Suppose $I$ is not squarefree. Up to a suitable relabeling, we can write $I=(J,x_1^2,x_2^2,\dots,x_t^2)$, where $J$ is the squarefree part of $I$, \emph{i.e.}, $G(J)=\{u\in G(I):u\ \textup{is squarefree}\}$ and $1\le t\le n$. Then $J$ is a matroidal ideal. Let $G$ be the simple graph on $[n]$ with $J=I(G)$, then $G^c$ is cochordal. Let $u_1,\dots,u_m$ be an admissible order of $J$. We claim that
		$$
		u_1,\dots,u_m,x_1^2,x_2^2,\dots,x_t^2
		$$
		is an admissible order of $I$. We only need to prove that
		$$
		Q=(u_1,\dots,u_m,x_1^2,\dots,x_{\ell-1}^2):x_\ell^2=(J,x_1^2,\dots,x_{\ell-1}^2):x_{\ell}^2
		$$
		is generated by variables. Indeed, let $x_ix_j:x_\ell^2\in Q$ be a generator with $i\le j$. If $x_ix_j:x_\ell^2$ is a variable, there is nothing to prove. Otherwise $x_ix_j:x_\ell^2=x_ix_j$, and $\ell\ne i,j$. Since $\deg_{x_\ell}(x_\ell^2)>\deg_{x_\ell}(x_ix_j)$, by the exchange property, $w=x_k(x_\ell^2)/x_\ell=x_kx_\ell\in I$, with $k=i$ or $k=j$. Then $k\ne\ell$, $w=x_kx_\ell\in J$ and $w:x_\ell^2=x_k\in Q$ is a variable that divides $x_ix_j:x_\ell^2$, as desired.
		
		We claim that $\textup{set}(x_{\ell}^2)=[n]\setminus\{\ell\}$, for all $\ell=1,\dots,t$. Let $i\in[n]\setminus\{\ell\}$. Then $x_ix_j\in G(I)$ for some $j$. If $j=\ell$, then $x_ix_\ell\in I$. Suppose $j\ne\ell$, then $\deg_{x_j}(x_ix_j)>\deg_{x_j}(x_\ell^2)$. By the exchange property, $x_ix_\ell\in I$, as desired.
		
		By equation (\ref{eq:HSILinQuot}), for all $k>0$,
		\begin{align*}
		\HS_k(I)\ &=\ \HS_k(J)+\sum_{\ell=1}^tx_\ell^2\cdot\HS_{k-1}((x_i:i\in[n]\setminus\{\ell\})).
		\end{align*}
		
		We set $J_{\ell}=(x_i:i\in[n]\setminus\{\ell\})$, $\ell=1,\dots,t$. Since $J$ is matroidal, $\HS_{k}(J)$ is matroidal by Corollary \ref{Cor:HSMatroidal}. Moreover, each ideal $J_\ell$ is generated by variables, and so it is matroidal. Hence all ideals $x_\ell^2\cdot\HS_{k-1}(J_\ell)$ are polymatroidal.
		
		To verify that $\HS_{k}(I)$ is polymatroidal, we check the exchange property. Let $u,v\in G(\HS_{k}(I))$ and $i$ such that $\deg_{x_i}(u)>\deg_{x_i}(v)$.
		
		To achieve our goal, we note the following fact. Let $w\in S$ be any squarefree monomial of degree $k+1$ and let $\ell\in[t]$. Then $x_\ell w\in\HS_{k}(I)$. Indeed, if $x_\ell$ divides $w$, then $x_\ell w\in x_\ell^2\cdot\HS_{k-1}(J_\ell)\subset\HS_{k}(I)$. Suppose $x_\ell$ does not divide $w$. For all $i$ such that $x_i$ divides $w$, $x_ix_\ell\in J$ because $i\ne\ell$. Fix a lexicographic order $\succ$ such that $x_\ell>x_i$ for all $i\in[n]\setminus\ell$. Up to relabeling, we can assume $\ell=1$ and that $\succ$ is induced by $x_1>x_2>\dots>x_n$. Writing $x_\ell w=x_\ell x_{j_2}\cdots x_{j_{k+2}}$ with $\ell=1<j_2<\dots<j_{k+2}\le n$, then $x_\ell x_{j_{k+2}}\in J$, $x_\ell x_{j_i}\in J$ and $x_\ell x_{j_i}\succ x_\ell x_{j_{k+2}}$, for $i=2,\dots,k+1$. Hence
		$$
		\{j_2,\dots,j_{k+1}\}\subseteq\big\{j\ |\ x_j\in(u\in G(J):u\succ x_\ell x_{j_{k+2}}):x_{\ell}x_{j_{k+2}}\big\}.
		$$
		This shows that $x_\ell w\in\HS_k(J)\subset\HS_k(I)$, because by Theorem \ref{Thm:BanRam}, $J$ has linear quotients with respect to $\succ$.

		If $u,v\in\HS_{k}(J)$ or $u,v\in x_\ell^2\cdot\HS_{k-1}(J_\ell)$, we can find $j$ with $\deg_{x_j}(u)<\deg_{x_j}(v)$ such that $x_j(u/x_i)\in\HS_{k}(I)$, because both $\HS_{k}(J),x_\ell^2\cdot\HS_{k-1}(J_\ell)$ are polymatroidal.
		
		Suppose now $u\in\HS_{k}(J)$ and $v\in x_\ell^2\cdot\HS_{k-1}(J_\ell)$. Then $\deg_{x_\ell}(u)<\deg_{x_\ell}(v)$ and $x_\ell(u/x_i)\in\HS_{k}(I)$, because $u/x_i$ is a squarefree monomial of degree $k+1$.
		
		Suppose $u\in x_\ell^2\cdot\HS_{k-1}(J_\ell)$ and $v\in\HS_{k}(J)$. Let $j$ such that $\deg_{x_j}(u)<\deg_{x_j}(v)$. Then $\deg_{x_j}(u)=0$. If $i=\ell$, then $x_j(u/x_\ell)\in\HS_{k}(I)$ because it is the product of $x_\ell$ times a squarefree monomial of degree $k+1$. If $i\ne\ell$, then $x_j(u/x_i)$ can also be written as such a product. In any case $x_j(u/x_i)\in\HS_{k}(I)$.
		
		Finally, suppose $u\in x_\ell\cdot\HS_{k-1}(J_\ell)$ and $v\in x_h^2\cdot \HS_{k-1}(J_h)$ with $\ell\ne h$. Suppose $i=\ell$ and let $j$ such that $\deg_{x_j}(u)<\deg_{x_j}(v)$. Then $u'=x_j(u/x_i)$ is either $x_\ell$ times a squarefree monomial of degree $k+1$, or is equal to $x_h$ times a squarefree monomial of degree $k+1$. In both cases, $u'\in\HS_{k}(I)$. Suppose now $i\ne\ell$. If there exist more than one $j$ with $\deg_{x_j}(u)<\deg_{x_j}(v)$, we can choose $j\ne h$. Then $\deg_{x_j}(v)=1$ and so $x_j$ does not divide $u$. Consequently $x_j(u/x_i)$ is equal to $x_\ell$ times a squarefree monomial of degree $k+1$, and so $x_j(u/x_i)\in\HS_{k}(I)$. If there is only one $j$ such that $\deg_{x_j}(u)<\deg_{x_j}(v)$, then $j=h$. We claim that $x_h$ does not divide $u$, then $x_h(u/x_i)$ is equal to $x_\ell$ times a squarefree monomial of degree $k+1$, and so $x_h(u/x_i)\in\HS_{k}(I)$, as wanted. Writing $v=x_h^2x_{j_1}\cdots x_{j_k}$, with $j_p\in[n]\setminus\{h\}$, $p=1,\dots,k$, then $\deg_{x_{j_p}}(v)=1\le\deg_{x_{j_p}}(u)$, for all $p=1,\dots,k$. Then $x_{j_1}\cdots x_{j_k}$ divides $u/(x_ix_\ell)$ because $\deg_{x_\ell}(u)>1\ge\deg_{x_\ell}(v)$ and $\deg_{x_i}(u)=1>\deg_{x_i}(v)$. This implies that $u=x_ix_\ell\cdot x_{j_1}\cdots x_{j_k}$. From this presentation it follows that $x_h$ does non divide $u$, because $i,\ell\ne h$ and $j_p\ne h$ for $p=1,\dots,k$, as well. 
		
		The cases above show that the exchange property holds for all monomials of $G(\HS_{k}(I))$. Hence $\HS_{k}(I)$ is polymatroidal and the proof is complete.
	\end{proof}

\end{document}